\newtheorem{theorem}{Theorem}[section]
\newtheorem{proposition}[theorem]{Proposition}
\newtheorem{lemma}[theorem]{Lemma}
\newtheorem{corollary}[theorem]{Corollary}
\newtheorem{fact}[theorem]{Fact}
\theoremstyle{definition}
\newtheorem{definition}[theorem]{Definition}
\theoremstyle{remark}
\newtheorem{remark}[theorem]{Remark}
\numberwithin{equation}{section}
\def \a {\bar a}
\def \b {\bar b}
\def \c {\bar c}
\def \x {\bar x}
\def \y {\bar y}
\def \z {\zeta}
\def \T {\bar t}
\def \d {\delta}
\def \dd {\partial}
\def \D {\Delta}
\def \t {\theta}
\def \T {\Theta}
\def \I {\mathcal I}
\def \Z {\mathcal V}
\def \C {\mathcal C}
\def \NN {\mathbb N}
\def \QQ {\mathbb Q}
\def \UU {\mathbb U}
\def \LL {\mathcal L}
\def \KK {\mathcal K}
\title{}
\author{}
\begin{document}

\begin{center}
{\large \bfseries GEOMETRIC AXIOMS FOR DIFFERENTIALLY CLOSED FIELDS WITH SEVERAL COMMUTING DERIVATIONS}

\vspace{5mm}

{\large Omar Le\'on S\'anchez}\\
University of Waterloo, ON, Canada \\
March 3, 2011

\end{center}

\begin{abstract}
A geometric first-order axiomatization of differentially closed fields of characteristic zero with several commuting derivations, in the spirit of Pierce-Pillay \cite{PiPi}, is formulated in terms of a relative notion of prolongation for Kolchin-closed sets.
\end{abstract}

\maketitle

\begin{center}
{\it AMS 2010 Mathematics Subject Classification: 03C65, 12H05.}
\end{center}

\

\section{Introduction}

An ordinary differential field is a field of characteristic zero equipped with a derivation, that is, an additive map $\d:K\to K$ such that $\d(ab)=(\d a) b+a(\d b)$. A differentially closed field is a differential field $(K,\d)$ such that every system of differential polynomial equations in several variables, with a solution in some differential extension, has a solution in $K$. An elegant first-order axiomatization of the class of ordinary differentially closed fields was given by Blum in \cite{Bl}. In \cite{PiPi}, Pierce and Pillay give a geometric axiomatization. Their axioms say that $(K,\d)$ is differentially closed if and only if $K$ is algebraically closed and whenever $V$ and $W$ are irreducible Zariski-closed sets with $W$ contained in the prolongation of $V$ and projecting dominantly onto $V$, then there is a $K$-point in $W$ of the form $(\x,\d\x)$.

Similarly, a field $K$ of characteristic zero equipped with $m$ commuting derivations is differentially closed if every system of partial differential polynomial equations in several variables with a solution in some extension has a solution in $K$. A first-order axiomatization generalizing Blum's was given by McGrail in \cite{Mc} \footnote{A different algebraic axiomatization can be found in Tressl \cite{Tr}.}. However, if one tries to give geometric axioms in terms of prolongations, the commutativity of the derivations might impose too many restrictions, so that a Pierce-Pillay type condition will not hold (see \cite{Pie}, Counterexample 6.2). Nonetheless, in \cite{Pie}, Pierce does manage to give an axiomatization (in arbitrary characteristic) that has a geometric flavor, though not exactly in the Pierce-Pillay sense. In this paper we take a different approach, establishing an axiomatization of differentially closed fields with $(m+1)$ commuting derivations which is geometric relative to the theory of differentially closed fields with $m$ derivations. Our axioms are a precise generalization of the Pierce-Pillay axioms. Two complications arise in our setting that do not appear in the ordinary case: one has to do with extending commuting derivations and the other has to do with first-order axiomatizability. Differential-algebraic results due to Kolchin are behind our solutions to both of these problems.

Suppose $\D=\{\d_1,\dots,\d_m\}$ are commuting derivations on a field $K$ of characteristic zero and for each $r=0,\dots, m$, let $\D_r=\{\d_1,\dots,\d_r\}$. Also, suppose $D:K\to K$ is an additional derivation on $K$ that commutes with $\D$. If $V$ is a $\D$-closed set defined over the $D$-constants of $K$, then Kolchin constructs a $\D$-tangent bundle of $V$ which has $\x\to(\x,D\x)$ as a section (\cite{Kol}, Chap. VIII, \S 2). In general, if $V$ is not necessarily defined over the $D$-constants, then $D$ gives a section of a certain torsor of the $\D$-tangent bundle of $V$ that we call the \emph{$D/\D$-prolongation} of $V$ (cf. Definition \ref{prolong}).  Our axioms will essentially say that {\it $(K,\D\cup\{D\})$ is differentially closed if and only if $K$ is algebraically closed and for each $r=0,\dots,m$, whenever $V$ and $W$ are $\D_r$-closed sets with $W$ contained in the $D/\D_r$-prolongation of $V$ and projecting onto $V$, then there is a $K$-point in $W$ of the form $(\x,D\x)$}. ``Essentially'', because in actual fact we also have to consider not just $\D$ and $D$ but all their independent $\QQ$-linear combinations (cf. Theorem \ref{main2} below).

The paper is organized as follows. In Section 2 we establish the differential-algebraic facts that underpin our results. In Section 3 we introduce relative prolongations and prove a geometric characterization of differentially closed fields. Finally, in Section 4, we address the issue of first-order axiomatizability.

\

\noindent{\it Acknowledgements:} I would like to thank Rahim Moosa for all the useful discussions and support towards the completion of this article.

\section{Extending $\D$-derivations}

In this paper the term ring is used for commutative ring with unity and the term field for field of characteristic zero.

Let us first recall some terminology from differential algebra. For details see \cite{Ko}. Let $R$ be a ring and $S$ a ring extension. An additive map $\d:R\to S$ is called a derivation if it satifies the Leibniz rule; i.e., $\d(ab)=(\d a)b+a(\d b)$. A ring $R$ equipped with a set of derivations $\D=\{\d_1,\dots,\d_m\}$, $\d_i:R\to R$, such that the derivations commute with each other is called a $\D$-ring. A $\D$-ring which is also a field (of characteristic zero) is called a $\D$-field. 

We fix for the rest of this section a $\D$-ring $R$. Let $\T$ denote the free commutative monoid generated by $\D$; that is, 
\begin{displaymath}
\T:=\{\d_m^{r_m}\cdots\d_1^{r_1}\,:\,r_m,\dots,r_1\geq 0\}.
\end{displaymath}
The elements of $\T$ are called the derivative operators of $R$. Let $\x=(x_1,\dots,x_n)$ be a family of indeterminates, and define
\begin{displaymath}
\t\x:=\{\partial x_j: \, j=1,\dots,n, \, \partial \in \T\}.
\end{displaymath}
The $\D$-ring of $\D$-polynomials over $R$ in the differential indeterminates $\x$ is $R\{\x\}:=R[\t\x]$; that is, the ring of polynomials in the algebraic indeterminates $\t\x$ with the canonical $\D$-ring structure given by $\d_i(\d_m^{r_m}\cdots\d_1^{r_1}x_j)=\d_m^{r_m}\cdots\d_i^{r_i+1} \cdots\d_1^{r_1}x_j$. 

We fix an orderly ranking in $\t\x$ by:
\begin{displaymath}
\d_m^{r_m}\cdots\d_1^{r_1}x_i\leq \d_m^{r'_m}\cdots\d_1^{r'_1}x_j \iff \left(\sum r_l,i,r_m,\dots,r_1\right)\leq \left(\sum r'_l,j,r'_m,\dots,r'_1\right)
\end{displaymath}
in the lexicographical order. According to this ranking, we enumerate the algebraic indeterminates by $\t \x=(\t_1\x,\t_2\x,\dots)$. Therefore, if $f\in R\{\x\}$ there is a unique $\hat f\in R[t_1,t_2,\dots]$ such that $f(\x)=\hat f(\t\x)$. 

We will be interested in adding an extra derivation on $R$. This amounts to the study of $\D$-derivations (see Chapter 0 of \cite{Kol}). 
\begin{definition}
A $\D$-derivation on $R$ is a derivation $D:R\to S$, where $S$ is a $\D$-ring extension of $R$, such that $D\d_i=\d_iD$ for $i=1,\dots,m$.
\end{definition}

If $f\in R\{\x\}$, by $f^D$ we mean the $\D$-polynomial in $S\{\x\}$ obtained by applying $D$ to the coefficients of $f$. 

\begin{remark}\label{super} \ 
\begin{enumerate} 
\item The map $f\mapsto f^D$ is itself a $\D$-derivation from $R\{\x\}$ to $S\{\x\}$. Indeed, it is clearly additive and to check the Leibniz rule and commutativity of $D$ with $\D$ it suffices to show that $(fg)^D=f^Dg+fg^D$ and $(\d_i f)^D=\d_i(f^D)$ for $i=1,\dots,m$, where $f(\x)=\hat f(\t\x)$ and $g(\x)=\hat g(\t\x)$ are such that $\hat f$ and $\hat g$ are monomials in $R[t_1,t_2,\dots]$. These computations are straightforward.
\item The $\D$-derivation $D$ extends uniquely to a derivation 
\begin{displaymath}
R\{\x\}\to S[D\t\x ]:= S[D\t_k\x:\, k=1,\dots]
\end{displaymath}
by $D(\t_k\x)=D\t_k\x$, for $k\geq 1$. Also, each $\d_i$ extends uniquely to a derivation
\begin{displaymath}
S[D\t \x]\to S[\d_iD\t \x] :=S[\d_iD\t_k\x:\, k=1,\dots]
\end{displaymath}
by $\d_i(D\t_k\x)=\d_iD\t_k\x$, for $k\geq 1$.
\end{enumerate}
\end{remark}

In order to describe how $D$ and compositions of $D$ with elements of $\D$ act on $R\{\x\}$, we introduce the following convenient terminology. Given $f\in R\{\x\}$, the \emph{Jacobian} of $f$ is 
\begin{displaymath}
df(\x):=\left( \frac{\partial \hat f}{\partial t_i}(\t\x)\right)_{i\in \NN}
\end{displaymath}
viewed as an element of $\left(R\{\x\}\right)^{\NN}$. Note that $df$ is finitely supported, in the sense that all but finitely many coordinates are zero. A straightforward computation shows that for each $\d_i\in \D$
\begin{displaymath}
\d_if(\x)=df(\x)\cdot\d_i\t \x+f^{\d_i}(\x),
\end{displaymath}
where $\d_i\t\x=(\d_i\t_1\x,\d_i\t_2\x,\dots)$ and the dot product is well defined since $df$ has finite support. 

The \emph{Hessian} of $f$ is defined as
\begin{displaymath}
Hf(\x):=\left(\frac{\partial^2 \hat f}{\partial t_i\partial t_j}(\t\x)\right)_{i,j\in \NN}
\end{displaymath} 
viewed as an element of $\left(R\{\x\}\right)^{\NN\times\NN}$. Again $Hf$ is finitely supported.

\begin{lemma}\label{exten1}
Let $S$ be a $\D$-ring extension of $R$ and $D:R\to S$ a $\D$-derivation. If $f\in R\{\x\}$ and $\d\in \D\cup \{D\}$, then
\begin{displaymath}
\d f(\x)=df(\x)\cdot \d \t\x + f^{\d}(\x).
\end{displaymath}
Also, if $\d$, $\z \in\D\cup \{D\}$ with $\d \neq \z$, then
\begin{eqnarray*}
\d\z f(\x)
&=& df(\x)\cdot\d\z\t\x+\d \t\x \cdot Hf(\x)\cdot (\z \t\x)^t + f^{\d\z}(\x) \\
&& + \, df^{\d}(\x)\cdot\z\t\x + df^{\z}(\x)\cdot \d\t\x. \\
\end{eqnarray*}
For any choice of $\d$ and $\z$ the elements $\d f$ and $\d\z f$ are well defined by part (2) of Remark \ref{super}. Note that the dot product $df(\x)\cdot\d\t\x$ and the matrix product $\d \t\x \cdot Hf(\x)\cdot (\z \t\x)^t$ are well defined because $df$ and $Hf$ have finite support.
\end{lemma}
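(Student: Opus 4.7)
The plan is to prove part (1) first (the single-derivation identity) and then iterate to obtain part (2). The case $\d \in \D$ of part (1) is already recorded in the text immediately preceding the lemma, so the remaining work splits into handling $\d = D$ and then combining two applications of part (1) to obtain the Hessian identity.

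For part (1) with $\d = D$: by Remark \ref{super}(2), $D$ extends uniquely to a derivation $R\{\x\} \to S[D\T\x]$ satisfying $D(\T_k\x) = D\T_k\x$. Since both sides of the desired identity $Df(\x) = df(\x) \cdot D\T\x + f^D(\x)$ are additive in $f$, and the right-hand side itself defines a derivation from $R\{\x\}$ to $S[D\T\x]$ (a Leibniz/chain-rule verification on monomials, entirely parallel to Remark \ref{super}(1)), the uniqueness of the extension reduces the identity to checking it on the generators $\T_k\x$ and on elements of $R$, both of which are immediate. Alternatively one can do the monomial computation directly, which is routine.

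For part (2): by part (1) applied to $\z$ we have $\z f(\x) = df(\x) \cdot \z\T\x + f^\z(\x)$. Applying $\d$ (which is available on the ambient ring after iterating Remark \ref{super}(2)) and using Leibniz gives
\[
\d\z f(\x) = \d(df(\x)) \cdot \z\T\x + df(\x) \cdot \d\z\T\x + \d f^\z(\x).
\]
Now apply part (1) entrywise to $df(\x) = \bigl(\frac{\partial \hat f}{\partial t_k}(\T\x)\bigr)_k$: this yields $\d(df(\x)) = \d\T\x \cdot Hf(\x) + df^\d(\x)$ (as row vectors), whose dot product with $\z\T\x$ supplies the $\d\T\x \cdot Hf(\x) \cdot (\z\T\x)^t$ and $df^\d(\x) \cdot \z\T\x$ terms. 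Finally, part (1) once more gives $\d f^\z(\x) = df^\z(\x) \cdot \d\T\x + f^{\d\z}(\x)$, where $(f^\z)^\d = f^{\d\z}$ invokes the commutativity of $\d$ and $\z$ as derivations (on $R$, extended canonically to $S$). Summing the pieces yields the stated formula.

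The main difficulty is purely bookkeeping: one must consistently view $df$ and $Hf$ as finitely-supported row vectors and symmetric matrices over $R\{\x\}$ so that the displayed products make sense, and one must ensure that each derivation appearing in the intermediate computations is unambiguously defined on the ring it acts on — which is exactly what Remark \ref{super}(2) guarantees.
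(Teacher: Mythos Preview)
Your proposal is correct and follows essentially the same route as the paper: establish the single-derivation identity first, then apply $\d$ to $\z f = df\cdot\z\t\x + f^\z$ and expand term by term using part (1) again on the entries of $df$ and on $f^\z$. The only minor difference is that for part (1) the paper does the direct monomial computation (which you mention as an alternative), whereas your primary argument invokes the uniqueness clause of Remark \ref{super}(2); and the paper makes explicit the small verification $(\partial\hat f/\partial t_k)^\d = \partial\widehat{f^\d}/\partial t_k$ that you are implicitly using when you write the term $df^\d(\x)$.
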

\begin{proof}
For the first equation, by additivity, it suffices to prove it for monomials $f(\x)=c\prod_{i}(\t_i \x)^{\alpha_i}$ where $c\in R$ and $\alpha_i=0$ except finitely many times:
\begin{eqnarray*}
\d f(\x) 
&=& \sum_{k}\left( c\prod_{i\neq k} (\t_i \x)^{\alpha_i} \; \alpha_k (\t_k \x)^{\alpha_k-1} \d \t_k \x \right)+ \d(c)\prod_{i}(\t_i \x)^{\alpha_i} \\
&=& df(\x)\cdot \d \t \x + f^{\d}(\x). \\
\end{eqnarray*}
For the second equation,
\begin{eqnarray*}
\d \z f(\x)
&=& \d\left( df(\x)\cdot \z \t\x + f^{\z}(\x)\right) \\
&=& \d\left(\sum_k \frac{\partial \hat f}{\partial t_k}(\t \x)\z\t_k \x+f^{\z}(\x)\right) \\
&=& \sum_k \frac{\partial \hat f}{\partial t_k}(\t \x)\d\z\t_k \x+ \sum_k \d\left(\frac{\partial \hat f}{\partial t_k}(\t \x)\right)\z\t_k \x+\d\left(f^{\z}(\x)\right)\\
&=& df(\x)\cdot\d\z\t\x+ \sum_k\left(d\left(\frac{\partial \hat f}{\partial t_k}(\t\x)\right)(\x)\cdot \d\t\x+ \left(\frac{\partial \hat f}{\partial t_k}(\t \x)\right)^\d (\x)\right)\z \t_k \x\\
&& + \, df^{\z}(\x)\cdot\d\t \x+f^{\d\z}(\x)\\
&=& df(\x)\cdot\d\z\t\x+\sum_k\sum_l\frac{\partial^2 \hat f}{\partial t_l\partial t_k}(\t \x)\, \d\t_l \x \; \z\t_k \x \\
&& + \sum_k \left(\frac{\partial \hat f}{\partial t_k}(\t \x)\right)^{\d}(\x) \, \z\t_k \x+	df^{\z}(\x)\cdot\d\t\x+f^{\d\z}(\x)\\
&=& df(\x)\cdot\d\z\t\x+\d\t\x\cdot Hf(\x)\cdot (\z\t\x)^t +f^{\d\z}(\x)\\
&& + \sum_k \left(\frac{\partial {\widehat {f^{\d}}}}{\partial t_k}\right)(\t \x)\z\t_k \x+	df^{\z}(\x)\cdot\d\t\x\\
&=& df(\x)\cdot\d\z\t\x+\d\t\x\cdot Hf(\x)\cdot (\z\t\x)^t +f^{\d\z}(\x)\\
&& + \, df^{\d}(\x)\cdot\z\t\x+	df^{\z}(\x)\cdot\d\t\x.\\
\end{eqnarray*}
Where the sixth equality uses $\left(\frac{\partial \hat f}{\partial t_k}(\t\x)\right)^{\d}=\frac{\partial \widehat{f^{\d}}}{\partial t_k}(\t\x)$. This follows by additivity and the fact that
\begin{eqnarray*}
\left[\frac{\partial (c \, t_1^{n_1}\cdots t_l^{n_l})}{\partial t_k}(\t\x)\right]^\d
&=& \left( n_k c \, (\t_1\x)^{n_1}\cdots (\t_k \x)^{n_k-1}\cdots (\t_l \x)^{n_l} \right)^\d \\
&=& n_k\d(c) \, (\t_1\x)^{n_1}\cdots (\t_k \x)^{n_k-1}\cdots (\t_l \x)^{n_l}\\
&=& \frac{\partial (\d(c)t_1^{n_1}\cdots t_l^{n_l})}{\partial t_k}(\t\x).\\
\end{eqnarray*}
\end{proof}

\begin{corollary}\label{exten2} 
Let $S$ be a $\D$-ring extension of $R$ and $D:R\to S$ a $\D$-derivation. Suppose $\a$ is a tuple in $S$ and $D':R\{\a\}\to S$ is a derivation extending $D$ such that $D'\t\a=\t D'\a$. Then $D'$ commutes with $\D$ on $R\{\a\}$. 
\end{corollary}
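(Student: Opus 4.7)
My plan is to reduce the proof to a computation on generators, exploiting the general fact that the commutator of two derivations is a derivation. Fix $\d_i \in \D$. First observe that $\d_i$ preserves $R\{\a\}=R[\t\a]$: for any $\partial \in \T$ and any coordinate $a_j$, one has $\d_i(\partial a_j) = (\d_i\partial)a_j \in R\{\a\}$. Thus both $D':R\{\a\}\to S$ and $\d_i:R\{\a\}\to R\{\a\}\subseteq S$ are derivations. A routine cross-cancellation in the Leibniz rule then shows that
\[
E := D'\circ \d_i - \d_i \circ D' : R\{\a\}\to S
\]
is additive and satisfies Leibniz, hence is itself a derivation.

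Since $R\{\a\}=R[\t\a]$ is generated as an $R$-algebra by the family $\t\a$, any derivation out of it is determined by its values on $R$ together with its values on the $\t_k\a$. On $R$, the map $E$ vanishes because $D'|_R=D$ and $D$ commutes with $\d_i$ by the hypothesis that $D$ is a $\D$-derivation. On a generator $\partial a_j$ with $\partial\in \T$, I use the hypothesis $D'\t\a=\t D'\a$ together with the fact that $\d_i$ commutes with every element of $\T$ on the $\D$-ring $S$:
\[
D'\d_i(\partial a_j) \;=\; D'\bigl((\d_i\partial)a_j\bigr) \;=\; (\d_i\partial)(D'a_j) \;=\; \d_i\bigl(\partial(D'a_j)\bigr) \;=\; \d_i D'(\partial a_j).
\]
So $E(\partial a_j) = 0$ for every generator.

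Combining the two previous paragraphs, $E$ is a derivation on $R\{\a\}$ vanishing on a set of $R$-algebra generators, so $E\equiv 0$; that is, $D'\d_i=\d_iD'$ on $R\{\a\}$. Since $i$ was arbitrary, $D'$ commutes with all of $\D$ on $R\{\a\}$. There is no real obstacle here: the content of the argument is entirely in the observation that commutators of derivations are derivations, and the hypothesis $D'\t\a=\t D'\a$ is used only to handle the generators $\partial a_j$, where it precisely lets us pull $\partial$ through $D'$.
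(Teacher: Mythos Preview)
Your argument is correct. It differs from the paper's proof in a pleasant way: the paper invokes the second identity of Lemma~\ref{exten1}, the explicit Hessian expansion of $\d\zeta f(\x)$, and observes that all terms except $df(\a)\cdot(\d_iD'\t\a-D'\d_i\t\a)$ cancel when one takes the difference $\d_iD'f(\a)-D'\d_if(\a)$; the hypothesis then kills this remaining term. You bypass that computation entirely by noting that the commutator $E=D'\d_i-\d_iD'$ is itself a derivation, so it suffices to check vanishing on $R$ and on the generators $\partial a_j$, which the hypothesis handles directly. Your route is more elementary and self-contained---it does not need the second-order formula of Lemma~\ref{exten1} at all---while the paper's route has the virtue of reusing machinery already developed for other purposes in the section. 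Either way the crux is the same: the hypothesis $D'\t\a=\t D'\a$ forces $D'$ to commute with each $\d_i$ on the generators, and derivation-ness propagates this to all of $R\{\a\}$.
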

\begin{proof}
Let $f\in R\{\x\}$ we must show that for any $\d_i\in \D$, $\d_iD' f(\a)=D'\d_if(\a)$. By the second equality of Lemma \ref{exten1}, we have that
\begin{displaymath}
\d_iD' f(\a)-D'\d_if(\a)=df(\a)\cdot(\d_iD'\t\a-D'\d_i\t\a).
\end{displaymath} 
But by assumption $\d_iD'\t\a=\d_i \t D'\a=D'\d_i\t \a$, as desired.
\end{proof}

\begin{definition}\label{tauf}
Let $f\in R\{\x\}$. We define the $\D$-polynomial $\tau_{D/\D} f\in S\{\x,\y\}$ by
\begin{displaymath}
\tau_{D/\D} f(\x, \y):= df(\x)\cdot \t \y +f^D(\x).
\end{displaymath}
When $\D$ and $D$ are understood we simply write $\tau f$. If $\a\in S$, we write $\tau(f)_{\a}(\y)$ for $\tau f(\a,\y)\in S\{\y\}$. Note that $\tau \t \x=\t \y$ and if $c\in R$ then $\tau c=Dc$. 
\end{definition}

\begin{lemma}\label{exten3}
Suppose $S$ is a $\D$-ring extension of $R$ and $D:R\to S$ is a $\D$-derivation. Then $\tau:R\{\x\}\to S\{\x,\y\}$ is a $\D$-derivation extending $D$.
\end{lemma}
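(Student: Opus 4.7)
The plan is to verify directly that $\tau$ is an additive map, extends $D$, and satisfies the Leibniz rule, and then to obtain commutativity with $\D$ by invoking Corollary \ref{exten2} rather than computing term by term.

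Additivity of $\tau$ is immediate from the linearity of the Jacobian $f \mapsto df$ and of the map $f \mapsto f^D$. For the Leibniz rule, I would use the usual product rule for partial derivatives $d(fg) = g \cdot df + f \cdot dg$ together with $(fg)^D = f^D g + f g^D$ from Remark \ref{super}(1); substituting these into the definition of $\tau$ gives $\tau(fg) = g \, \tau f + f \, \tau g$. For the extension claim, given $c \in R$ one has $dc = 0$ and $c^D = Dc$, so $\tau c = Dc$. At this stage $\tau: R\{\x\} \to S\{\x, \y\}$ is a derivation extending $D:R \to S$.

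It remains to show $\tau$ commutes with each $\d_i \in \D$. I apply Corollary \ref{exten2} with $\a := \x$, viewed as a tuple in the $\D$-ring extension $S\{\x,\y\}$ of $R$, and $D' := \tau$. The hypothesis to verify is $\tau \t \x = \t \tau \x$. For each $k$, note that $\widehat{\t_k \x} = t_k$, so its Jacobian $d(\t_k \x)$ is the $k$-th standard basis vector and $(\t_k \x)^D = 0$; therefore $\tau \t_k \x = \t_k \y$. Since the orderly ranking enumerates $x_1, \ldots, x_n$ first, $x_j = \t_j \x$, whence $\tau \x = \y$ and $\t \tau \x = \t \y = \tau \t \x$. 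Corollary \ref{exten2} then yields $\tau \d_i = \d_i \tau$ on $R\{\x\}$, completing the proof.

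The main potential difficulty, which this route avoids, is a direct computation of $\d_i \tau f$ versus $\tau \d_i f$ via Lemma \ref{exten1}; that approach works but requires careful bookkeeping of the reindexing $\d_i \t_k \x = \t_{\sigma(k,i)} \x$ for some $\sigma$ and an appeal to the symmetry of the Hessian to collect the mixed second-order terms. Routing through Corollary \ref{exten2} reduces commutativity with $\D$ to the manifestly compatible behaviour of $\tau$ on the $\D$-generators $\t \x$.
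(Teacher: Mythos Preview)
Your proof is correct and follows essentially the same route as the paper: additivity, the Leibniz rule via the product rule for $d(\cdot)$ and Remark~\ref{super}(1), the extension property $\tau c = Dc$, and then commutativity with $\D$ by invoking Corollary~\ref{exten2} with $\a=\x$, $D'=\tau$, and the key identity $\tau\t\x=\t\y=\t\tau\x$. Your write-up simply spells out a few details (e.g.\ why $\tau\t_k\x=\t_k\y$) that the paper leaves implicit.
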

\begin{proof}
The map $f\mapsto \tau f$ is additive and, by part (1) of Remark \ref{super} and the fact that $d(fg)(\x)\cdot\t\y=\left(df(\x)\cdot\t\y\right) g(\x)+f(\x)\left(dg(\x)\cdot\t\y\right)$, we have that $\tau(fg)=(\tau f)g+f(\tau g)$.
Hence, since $\tau c=D c$ for all $c\in R$, $\tau$ is a derivation extending $D$.
For commutativity, note that $\tau\t \x=\t \y=\t \tau \x$. Now just apply Corollary \ref{exten2} with $\tau$, $\x$ and $S\{\x,\y\}$ in place of $D'$, $\a$ and $S$.
\end{proof} 

We can now give the desired criterion for when a $\D$-derivation can be extended to a finitely generated $\D$-ring extension. The analogue of the following proposition when $\D=\emptyset$ (i.e. $m=0$) can be found in (\cite{La}, Chap. VII, \S5), and it is the main point in the Pierce-Pillay geometric axiomatization of ordinary differentially closed fields.

\begin{proposition}\label{exten5}
Suppose $S$ is a $\D$-ring extension of $R$ and $D:R\to S$ is a $\D$-derivation. Let $\a$ be a tuple of $S$ and $A\subseteq R\{\x\}$ such that $[A]=\I(\a/R)$. Here $[A]$ denotes the $\D$-ideal generated by $A$ and $\I(\a/R)=\{f\in R\{\x\} : f(\a)=0\}$. Suppose there is a tuple $\b$ of $S$ such that
\begin{equation}\label{form}
\tau (g)_{\a}(\b)=0, \textrm{ for all } g\in A.
\end{equation}
Then there is a unique $\D$-derivation $D':R\{\a\}\to S$ extending $D$ such that $D'\a=\b$. 
\end{proposition}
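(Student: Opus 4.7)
The natural guess is to set
$$D'(f(\a)) := \tau(f)_\a(\b) = df(\a) \cdot \t\b + f^D(\a)$$
for $f \in R\{\x\}$, since by Definition \ref{tauf} this forces $D'(c) = Dc$ for $c \in R$ and $D'(\a_i) = b_i$. The main task is therefore to show that this prescription is well-defined on $R\{\a\} \cong R\{\x\}/[A]$, and that the resulting map is a $\D$-derivation. Uniqueness is then immediate: any $\D$-derivation $D''$ extending $D$ with $D''\a = \b$ must satisfy $D''\t\a = \t\b$ by commutativity with $\D$, and hence by the same computation as in the first part of Lemma \ref{exten1}, $D''(f(\a)) = df(\a) \cdot \t\b + f^D(\a) = D'(f(\a))$.

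For well-definedness I would introduce the auxiliary map $\Phi : R\{\x\} \to S$ defined by $\Phi(f) := \tau(f)_\a(\b)$. This is the composition of the $\D$-derivation $\tau : R\{\x\} \to S\{\x,\y\}$ from Lemma \ref{exten3} with the $\D$-ring homomorphism $S\{\x,\y\} \to S$ substituting $\x \mapsto \a$, $\y \mapsto \b$. Consequently $\Phi$ is additive, obeys the Leibniz rule $\Phi(fg) = f(\a)\Phi(g) + g(\a)\Phi(f)$ with $S$ regarded as an $R\{\x\}$-module via evaluation at $\a$, and commutes with $\D$ in the sense $\Phi(\d_i f) = \d_i \Phi(f)$ for each $i$.

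The crux is to verify that $\Phi$ vanishes on $[A]$. Since a typical element of $[A]$ is a finite sum $\sum_k c_k \, \partial_k g_k$ with $c_k \in R\{\x\}$, $\partial_k \in \T$ and $g_k \in A$, additivity reduces the problem to a single term $c \cdot \partial g$. Iterating $\D$-commutativity gives
$$\Phi(\partial g) = \partial \Phi(g) = \partial\bigl(\tau(g)_\a(\b)\bigr) = 0$$
by the hypothesis (\ref{form}); and because $\partial g \in [A] = \I(\a/R)$, the element $(\partial g)(\a) \in S$ is zero, so the Leibniz rule yields $\Phi(c \cdot \partial g) = c(\a)\Phi(\partial g) + (\partial g)(\a)\Phi(c) = 0$. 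Hence $\Phi$ descends to a derivation $D' : R\{\a\} \to S$ extending $D$ with $D'\a = \b$; using $\tau(\t_k\x) = \t_k\y$ from Definition \ref{tauf} one verifies $D'\t\a = \t\b = \t D'\a$, so Corollary \ref{exten2} upgrades $D'$ to a $\D$-derivation. The one delicate point is this vanishing argument, where the hypothesis on $A$ is leveraged via a coordinated use of $\D$-commutativity (for $\partial g$) and the Leibniz rule (for $c \cdot \partial g$), working precisely because the generators of $[A]$ already lie in $\I(\a/R)$.
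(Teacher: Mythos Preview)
Your proof is correct and follows essentially the same route as the paper: define $D'(f(\a)) = \tau(f)_\a(\b)$, verify that $\tau(\cdot)_\a(\b)$ vanishes on $[A]$ via the Leibniz rule and $\D$-commutativity of $\tau$ from Lemma~\ref{exten3}, and argue uniqueness using Lemma~\ref{exten1}. The only cosmetic differences are that you name the auxiliary map $\Phi$ explicitly and invoke Corollary~\ref{exten2} for $\D$-commutativity of $D'$, though the latter already follows from your composition argument (since evaluation at $(\a,\b)$ is a $\D$-ring homomorphism).
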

\begin{proof}
First we show that (\ref{form}) holds for all elements in $\I(\a/R)$. For each $\dd\in \T$, $g\in A$ and $h\in R\{\x\}$, by Lemma \ref{exten3} we have
\begin{equation}\label{eq11}
\tau (h\dd g)(\x,\y)=\tau h(\x,\y)\dd g(\x)+h(\x)\dd(\tau g(\x,\y)).
\end{equation}
By assumption $\tau g(\a,\b)=\tau (g)_{\a}(\b)=0$ and since $g\in \I(\a/R)$ we get $\dd g(\a)=0$, so evaluating (\ref{eq11}) at $(\a,\b)$ yields $\tau(h\dd g)_{\a} (\b)=0$. It follows that for each $f\in[A]=\I(\a/R)$, $\tau(f)_{\a} (\b)=0$.

Now let $\alpha\in R\{\a\}$, then $\alpha=f(\a)$ for some $f\in R\{\x\}$. Define
\begin{displaymath}
D'\alpha=\tau(f)_{\a} (\b).
\end{displaymath}
This does not depend on the choice of $f$, since if $\alpha=f(\a)=f'(\a)$ then $f-f'\in \I(\a/R)$ and hence by assumption $\tau(f)_{\a}(\b)-\tau (f')_{\a}(\b)=\tau(f-f')_{\a}(\b)=0$. It is clear that $D'\a=\tau(\x)_{\a}(\b)=\b$ and, for all $c\in R$, $D'c=\tau(c)_{\a}(\b)=Dc$. By Lemma ~\ref{exten3} this is a $\D$-derivation extending $D$ to $R\{\a\}\to S$. To show uniqueness suppose $D'':R\{\a\}\to S$ is another $\D$-derivation extending $D$ such that $D''\a=\b$. Then, for any $f\in R\{\x\}$, by the first equality of Lemma \ref{exten1} we get
\begin{eqnarray*}
D'' f(\a)
&=& df(\a)\cdot D''\t\a+f^{D''}(\a)=df(\a)\cdot \t D''\a+f^{D}(\a)\\
&=& df(\a)\cdot \t\b+f^{D}(\a)=\tau(f)_{\a} (\b)=D'f(\a)
\end{eqnarray*}
so $D'=D''$.
\end{proof}

Thus if we want to extend a $\D$-derivation we need to find solutions to certain $\D$-equations. The following result of Kolchin's can then be used to see that there is always such a solution in some $\D$-field extension.

\begin{fact}[\cite{Kol}, Chap. 0, \S 4]\label{pro1}
Suppose $R$ is a $\D$-subring of a $\D$-field $K$ and $D:R\to K$ a $\D$-derivation. Let $\a$ be a tuple of $K$. Then 
\begin{displaymath}
[\tau (f)_{\a}(\y):f\in \I(\a/R)]
\end{displaymath}
is a proper $\D$-ideal of $K\{\y\}$.
\end{fact}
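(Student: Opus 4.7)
My strategy is to reduce the properness claim to an extension problem for $\D$-derivations and then solve that problem via characteristic sets. Suppose we can find a $\D$-field extension $K^*\supseteq K$ and a tuple $\b\in(K^*)^n$ with $\tau(f)_{\a}(\b)=0$ for every $f\in\I(\a/R)$. Then the evaluation map $\y\mapsto\b$ descends to a $\D$-ring homomorphism $K\{\y\}/[\tau(f)_{\a}(\y):f\in\I(\a/R)]\to K^*$ sending $1\mapsto 1\neq 0$, which proves the ideal is proper. By Proposition \ref{exten5} applied with $A=\I(\a/R)$ (so $[A]=\I(\a/R)$ automatically), the existence of such $\b$ is equivalent to extending $D$ to a $\D$-derivation $D'\colon R\{\a\}\to K^*$ with $D'\a=\b$; indeed $\tau(f)_{\a}(\b)=D'f(\a)=D'(0)=0$ in that case.

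The task thus becomes: extend the given $\D$-derivation $D\colon R\to K$ to a $\D$-derivation on the $\D$-subring $R\{\a\}$ of $K$, taking values in some $\D$-field extension of $K$. Inducting on the length of $\a$, it suffices to treat the adjunction of a single element $a\in K$. If $a$ is $\D$-transcendental over $R$, this is immediate: $R\{a\}$ is $\D$-isomorphic to a $\D$-polynomial ring over $R$, so we may set $D'a$ to any chosen value (e.g.\ $0\in K$) and define $D'(\dd a):=\dd(D'a)$ for $\dd\in\T$; Corollary \ref{exten2} then makes $D'$ commute with $\D$.

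The substantive case is $a$ $\D$-algebraic over $R$. Let $p:=\I(a/R)$, a prime $\D$-ideal of $R\{x\}$, and fix a characteristic set $\Sigma$ of $p$ for the orderly ranking. For each $g\in\Sigma$ with leader $v=\t_{k_0}x$, the equation $\tau(g)_a(y)=dg(a)\cdot\t y+g^D(a)=0$ is $K$-linear in $\t y$, and the coefficient of $\t_{k_0}y$ is the separant $s_g(a)$. Since separants of a characteristic set of a prime $\D$-ideal lie outside the ideal, $s_g(a)\neq 0$, and one can successively solve these equations for their leader positions in terms of free choices at lower-rank positions, made in a sufficiently large $\D$-field extension $K^*\supseteq K$. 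Setting $D'a:=b$ to be the resulting element and propagating by $D'\t a=\t D'a$, Corollary \ref{exten2} makes $D'$ commute with $\D$, and by construction $\tau(g)_a(b)=0$ for every $g\in\Sigma$.

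The main obstacle is upgrading the vanishing from $g\in\Sigma$ to all $f\in p$. I would use Ritt's pseudo-reduction: every $f\in p$ admits an identity $s^Nf=\sum_jh_j\dd_jg_j+r$, with $g_j\in\Sigma$, $\dd_j\in\T$, $s$ a product of separants, and $r$ reduced with respect to $\Sigma$; since $r\in p$, the defining property of a characteristic set of a prime $\D$-ideal forces $r=0$. Apply $\tau$ to this identity and evaluate at $(a,b)$: using $f(a)=0$, the left-hand side becomes $s(a)^N\tau(f)_a(b)$; using $g_j(a)=(\dd_jg_j)(a)=0$ together with the commutativity of $\tau$ with $\D$ from Lemma \ref{exten3} (which yields $\tau(\dd_jg_j)_a(b)=\dd_j\tau(g_j)_a(b)=0$), the right-hand side becomes $0$. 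Dividing by $s(a)^N\neq 0$ gives $\tau(f)_a(b)=0$, completing the proof. The careful bookkeeping needed to make the pseudo-reduction identity interact correctly with the $\D$-derivation $\tau$ is the technical heart of Kolchin's argument.
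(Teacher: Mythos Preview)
The paper does not prove this statement; it is quoted as a Fact from Kolchin \cite{Kol} and used as a black box. So there is no ``paper's proof'' to compare against, and I will assess your argument on its own merits.

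Your reduction of properness to the extension problem, the induction on the length of $\a$, the treatment of the $\D$-transcendental case, and the pseudo-reduction upgrade from $\Sigma$ to all of $p$ are all correct. In particular, your use of Lemma~\ref{exten3} to push $\tau$ past $\partial_j$ and then evaluate at $(a,b)$ is exactly the right mechanism, and the computation $s(a)^N\tau(f)_a(b)=0$ goes through as you describe.

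The genuine gap is the step where you produce $b$. You write that ``one can successively solve these equations for their leader positions in terms of free choices at lower-rank positions, made in a sufficiently large $\D$-field extension $K^*$.'' But solving the \emph{linear algebraic} system in the independent unknowns $\t_k y$ only yields a family of values $\beta_k\in K$; to get an element $b\in K^*$ with $\t_k b=\beta_k$ you need the integrability constraints $\delta_i\beta_k=\beta_{k'}$ whenever $\t_{k'}=\delta_i\t_k$. Equivalently (via Corollary~\ref{exten2}), the derivation $D'$ you construct by setting $D'(\t_k a)=\beta_k$ is a $\D$-derivation only if these compatibilities hold. Those compatibilities are not automatic: they amount to showing that the autoreduced linear set $\{\tau(g)_a(y):g\in\Sigma\}$ is \emph{coherent} in $K\{y\}$, and that in turn is essentially equivalent to the properness you are trying to prove. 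Put differently, the ``technical heart'' you identify (pseudo-reduction) is actually the easy half; the hard half is the existence of $b$, and your sketch assumes it. Kolchin's argument handles this via the $\D$-module structure of differentials (or an equivalent coherence verification), and that is precisely the content that is missing here.
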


Putting Proposition \ref{exten5} and Fact \ref{pro1} together we get the following result on extending $\D$-derivations to differentially closed fields, which is essentially Corollary 1 of \S 0.4 of Kolchin \cite{Kol}.

\begin{corollary}\label{Null}
Suppose $R$ is a $\D$-subring of a differentially closed field $(K,\D)$, and $D:R\to K$ a $\D$-derivation. Then there is a $\D$-derivation $D':K\to K$ extending $D$.
\end{corollary}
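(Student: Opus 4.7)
The plan is a Zorn's-lemma extension argument whose one-step extension is powered by Proposition \ref{exten5} and Fact \ref{pro1}.

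First I would consider the set $\mathcal F$ of all pairs $(R^*,D^*)$, where $R^*$ is a $\D$-subring of $K$ containing $R$ and $D^*:R^*\to K$ is a $\D$-derivation extending $D$, ordered by extension. Unions of chains in $\mathcal F$ remain in $\mathcal F$: the union of the underlying rings is again a $\D$-subring of $K$, and the $D^*$'s glue to a single derivation that still commutes with every $\d_i$. Zorn then yields a maximal element $(R^*,D^*)$, and the whole task reduces to showing $R^*=K$.

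For this, I assume toward a contradiction that there exists $a\in K\setminus R^*$. Apply Fact \ref{pro1} to the $\D$-derivation $D^*:R^*\to K$ and the singleton tuple $a\in K$: the $\D$-ideal
\begin{displaymath}
J\;:=\;[\,\tau(f)_a(y)\,:\,f\in \I(a/R^*)\,]\;\subseteq\;K\{y\}
\end{displaymath}
is a proper $\D$-ideal (where $\tau=\tau_{D^*/\D}$). Since $K$ is differentially closed, I can realize a zero of $J$ inside $K$: picking any prime $\D$-ideal $P\supseteq J$ (which exists as $J$ is proper, via the Ritt--Raudenbush decomposition of the radical), the quotient $K\{y\}/P$ is a $\D$-domain whose $\D$-field of fractions is a $\D$-extension of $K$ containing a solution of $P$; by the defining property of differential closedness such a solution can be found in $K$ itself. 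Call it $b\in K$; then $\tau(f)_a(b)=0$ for every $f\in \I(a/R^*)$.

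Now Proposition \ref{exten5}, applied with the tuple $a$, a set of $\D$-generators $A$ of $\I(a/R^*)$, and the tuple $b$, produces a $\D$-derivation
\begin{displaymath}
D^{**}:R^*\{a\}\to K
\end{displaymath}
extending $D^*$ with $D^{**}(a)=b$. This contradicts the maximality of $(R^*,D^*)$ and forces $R^*=K$, giving the required $D'=D^*:K\to K$.

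The main conceptual point, and essentially the only nonroutine step, is the translation from ``$J$ is a proper $\D$-ideal'' to ``$J$ has a zero in $K$''; everything else is a direct appeal to the preceding two results plus the standard Zorn argument. Once that translation is in hand via differential closedness (and, if one wants to be punctilious, the Ritt--Raudenbush theorem to reduce to a prime $\D$-ideal), the proof is a one-step extension loop.
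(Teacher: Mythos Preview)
Your argument is correct and follows essentially the same route as the paper: a Zorn's-lemma maximality argument where the one-step extension uses Fact~\ref{pro1} to get a proper $\D$-ideal, differential closedness of $K$ to produce a zero $b$, and Proposition~\ref{exten5} to extend the derivation to $R^*\{a\}$. The only cosmetic difference is that the paper invokes the differential Nullstellensatz directly (citing McGrail) to pass from ``proper $\D$-ideal'' to ``has a $K$-point,'' whereas you spell this out via Ritt--Raudenbush and a prime $\D$-ideal.
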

\begin{proof}
Consider the set
\begin{eqnarray*}
\mathcal S 
&=& \{(S,D'): R\subseteq S\subseteq K \textrm{ is a $\D$-subring of $K$ and} \\ 
&& \textrm{ $D':S\to K$ is a $\D$-derivation extending $D$}\}.
\end{eqnarray*}
partially ordered by containment. The union of any chain of $\mathcal S$ gives an upper bound, and so, by Zorn's lemma, there is a maximal element, call it $(S,D')$. Towards a contradiction, suppose $S\neq K$. Then there is $a\in K\backslash S$, and, by Fact \ref{pro1}, $[\tau (f)_{a}(y):f\in \I(a/S)]$ is a proper $\D$-ideal of $K\{\x\}$. By the Nullstellensatz for differentially closed fields (see for example theorem 3.1.10 of \cite{Mc}), there is $b\in K$ such that $\tau(f)_a(b)=0$ for all $f\in \I(a/S)$. By Proposition \ref{exten5}, there is a $\D$-derivation $D'':S\{a\}\to K$ extending $D'$. This contradicts the maximality of $(S,D')$. Hence $S=K$ and $D':K\to K$ is a $\D$-derivation extending $D$.
\end{proof}

We conclude this section with an improvement on Proposition \ref{exten5}. We would like to only have to check condition (\ref{form}) for a set of $\D$-polynomials $A\subseteq R\{\x\}$ such that $\{A\}=\I(\a/R)$, where $\{A\}$ denotes the radical $\D$-ideal generated by $A$. As the reader may expect this will be useful when dealing with issues of first-order axiomatizability (see Proposition \ref{prolim} below).

First we need a lemma. For each $i=1,2,\dots$, let $\x_i$ be an $n$-tuple of differential indeterminates. Suppose $D:R\to R$ is a $\D$-derivation. Then $\tau:~R\{\x_1\}\to R\{\x_1,\x_2\}$. Thus we can compose $\tau$ with itself, for each $k\geq 1$ and $f\in R\{\x_1\}$, $\tau^k f=\tau\cdots \tau f\in R\{\x_1,\x_2,\dots,\x_{2^k}\}$. Define $\nabla\x:=(\x,D\x)$ and note that, for each $k\geq 1$, the composition $\nabla^k\x=\nabla\cdots\nabla\x$ is a tuple of length $n2^{k}$.

\begin{lemma}\label{radic}
Suppose $D:R\to R$ is a $\D$-derivation and $f\in R\{\x\}$. 
\begin{enumerate}
\item If $\a$ is a tuple of $R$, then for each $k\geq 1$,
\begin{displaymath}
\tau^kf(\nabla^k \a)=D^kf(\a)
\end{displaymath}
In particular, if $f(\a)=0$ then $\tau^k f(\nabla^k \a)=0$.
\item For each $k\geq 1$, we have 
\begin{displaymath}
\tau^k f^k=k! (\tau f)^k+f\, p
\end{displaymath}
for some $p\in R\{\x_1,\x_2,\dots,\x_{2^k}\}$.
\end{enumerate}
\end{lemma}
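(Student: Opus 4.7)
Both parts proceed by induction on $k$.

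For (1), the base case $k=1$ is a direct unwinding of the definitions. Applying the first equation of Lemma~\ref{exten1} with $\d = D$ gives $Df(\a) = df(\a)\cdot D\t\a + f^D(\a)$, and since $\a$ is a tuple in $R$ and $D$ commutes with every $\d_i \in \D$, we have $D\t\a = \t D\a$. Therefore $Df(\a) = df(\a)\cdot \t D\a + f^D(\a) = \tau f(\a, D\a) = \tau f(\nabla\a)$. For the inductive step, I would apply precisely the same observation to the $\D$-polynomial $g := \tau^k f$ and the tuple $\c := \nabla^k \a$ (which is a tuple of elements of $R$, so still satisfies $D\t\c = \t D\c$), obtaining $\tau g(\c, D\c) = D(g(\c))$. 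Since $\nabla^{k+1}\a = (\c, D\c)$, this yields
\[
\tau^{k+1}f(\nabla^{k+1}\a) = \tau g(\c, D\c) = D\bigl(\tau^k f(\nabla^k\a)\bigr) = D(D^k f(\a)) = D^{k+1} f(\a),
\]
using the inductive hypothesis. The ``in particular'' clause is then immediate from $D^k 0 = 0$.

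For (2), the base case $k=1$ is trivial ($p=0$). For the inductive step, I would use that $\tau$ is a derivation (Lemma~\ref{exten3}) and iterate the Leibniz rule to expand $\tau^{k+1}(f^{k+1})$ with $f^{k+1}$ viewed as a product of $k+1$ copies of $f$. Modulo the principal ideal $(f)$, only those terms survive in which every factor of $f$ receives at least one application of $\tau$; since there are exactly $k+1$ applications available, pigeonhole forces each factor to receive exactly one, producing $(k+1)!$ contributions each equal to $(\tau f)^{k+1}$. To justify this modulo-$f$ collapse cleanly, I would establish in parallel the auxiliary claim that $\tau^j(f^k) \equiv 0 \pmod{(f)}$ whenever $j < k$, which follows by a second nested induction using the same Leibniz expansion, since in every such term one factor of $f$ survives untouched.

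The main obstacle lies in part (2): carefully bookkeeping the combinatorial expansion of iterated $\tau$ on a product and making the modulo-$(f)$ pigeonhole argument rigorous via the nested induction. Part (1), by contrast, is essentially mechanical once one isolates the single key ingredient, namely the commutation $D\t\a = \t D\a$ at every step of the iteration.
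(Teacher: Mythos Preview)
Your argument for part (1) is exactly the paper's: induction on $k$, with the base case coming from Lemma~\ref{exten1} together with the commutation $D\t\a=\t D\a$, and the induction step obtained by applying the base case to $\tau^{k}f$ and $\nabla^{k}\a$.

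For part (2) your route differs from the paper's. The paper fixes $k$ and runs an induction on $l=1,\dots,k$, proving the sharper intermediate identity
\[
\tau^{l}(f^{k})=\frac{k!}{(k-l)!}\,f^{k-l}(\tau f)^{l}+f^{\,k-l+1}p_{l}
\]
for explicitly constructed $p_{l}$; setting $l=k$ then gives the lemma. Your proposal instead expands $\tau^{k}(f^{k})$ by the iterated Leibniz rule applied to a $k$-fold product and kills all terms with an untouched factor modulo $(f)$, leaving the $k!$ bijection terms. That combinatorial picture is correct and more direct; two small remarks are worth making. First, your outer ``induction on $k$'' is not really an induction: the pigeonhole argument you describe for $k+1$ never invokes the statement for $k$, so you may as well present it as a direct proof for each $k$ via the multinomial Leibniz formula. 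Second, your auxiliary claim $\tau^{j}(f^{k})\equiv 0\pmod{(f)}$ for $j<k$ is already a special case of that same multinomial expansion (some exponent must vanish), so no genuinely nested induction is needed. What the paper's organization buys is the finer information that the remainder is divisible by $f^{\,k-l+1}$ rather than merely by $f$; your approach buys a shorter path to exactly the statement required.
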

\begin{proof}
(1) By induction on $k$. The first equality of Lemma \ref{exten1} gives us
\begin{displaymath}
\tau f(\nabla \a)=df (\a)\cdot \t D\a+f^D(\a)=df (\a)\cdot D \t \a+f^D(\a)=Df(\a).
\end{displaymath}
The induction step follows easily: 
\begin{displaymath}
\tau^{k+1} f(\nabla^{k+1}\a)=\tau(\tau^k f)(\nabla(\nabla^k\a))=D\tau^k f(\nabla^k\a)=DD^k f(\a)=D^{k+1}f(\a).
\end{displaymath}
(2) We prove that for each $l=1,\dots,k$ we have 
\begin{equation}\label{good}
\tau^l(f^k)=\frac{k!}{(k-l)!}f^{k-l}(\tau f)^l+f^{k-l+1} \, p_l
\end{equation}
for some $p_l\in K\{\x_1,\x_2,\dots,\x_{2^l}\}$. From which the results follows when $l=k$. By Lemma \ref{exten3}, we have $\tau f^k=kf^{k-1}\tau f$, so (\ref{good}) holds for $l=1$ with $p_1=0$. Assume it holds for $1\leq l <k$, then
\begin{eqnarray*}
\tau^{l+1}f^k
&=& \tau\tau^l f^k=\tau \left(\frac{k!}{(k-l)!}f^{k-l}(\tau f)^l+f^{k-l+1} p_l\right)  \\
&=&\frac{k!}{(k-l)!}\left( (k-l)f^{k-l-1}(\tau f)^{l+1}+lf^{k-l}(\tau f)^{l-1}\tau^2 f \right)\\
&& + \; (k-l+1)f^{k-l}(\tau f) \, p_l+ f^{k-l+1}\tau p_l\\
&=& \frac{k!}{(k-l-1)!}f^{k-l-1}(\tau f)^{l+1}+f^{k-l} \,p_{l+1}
\end{eqnarray*}
where
\begin{displaymath}
p_{l+1}=\frac{k!\; l}{(k-l)!}(\tau f)^{l-1}\tau^2 f+(k-l+1)(\tau f) \, p_l+ f \tau p_l.
\end{displaymath}
\end{proof}

\begin{proposition}\label{better}
Suppose $R$ is a reduced $\QQ$-algebra and $D:R\to R$ is a $\D$-derivation. Let $\a$ a tuple of $R$ and $A\subseteq \I(\a/R)$. Suppose there is a tuple $\b$ of $R$ such that
\begin{equation}\label{rad}
\tau (g)_{\a}(\b)=0, \textrm{ for all } g\in A.
\end{equation}
Then $\tau (f)_{\a}(\b)=0$ for all $f\in \{A\}$. 
\end{proposition}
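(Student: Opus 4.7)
The plan is as follows. First I would observe that the argument in the first part of the proof of Proposition \ref{exten5} extends verbatim to show $\tau g(\a, \b) = 0$ for all $g \in [A]$: it relies only on $A \subseteq \I(\a/R)$, on the vanishing hypothesis for $A$, and on the $\D$-closedness of $\I(\a/R)$ and $\I((\a, \b)/R)$, not on the equality $[A] = \I(\a/R)$. For $f \in \{A\}$ I would then fix $k \geq 1$ with $f^k \in [A]$, noting $f(\a) = 0$ since $\I(\a/R)$ is radical in the reduced ring $R$.

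The crux is Lemma \ref{radic}(2), which provides $\tau^k f^k = k!(\tau f)^k + f \cdot p$ in $R\{\x_1, \dots, \x_{2^k}\}$. I would evaluate this identity at the tuple $\nabla^{k-1}(\a, \b) \in R^{n \cdot 2^k}$, namely the $(k-1)$-fold $D$-prolongation of the $2n$-tuple $(\a, \b)$. Viewing $\tau f^k$ as a $\D$-polynomial in the single $2n$-tuple $(\x_1, \x_2)$ and applying Lemma \ref{radic}(1) (whose proof is insensitive to the tuple length) with $(k-1)$ iterations of $\tau$, I would obtain $\tau^k f^k(\nabla^{k-1}(\a, \b)) = D^{k-1}(\tau f^k)(\a, \b)$. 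Since $\tau f^k(\a, \b) = 0$ by the first step, $\tau f^k$ lies in $\I((\a, \b)/R)$, and this ideal is $D$-closed (immediate from $Dh(\eta) = D(h(\eta))$ via Lemma \ref{exten1} and the commutativity of $D$ with $\D$), so the right-hand side vanishes.

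Substituting back into the Lemma \ref{radic}(2) identity at $\nabla^{k-1}(\a, \b)$: the factor $(\tau f)^k$ depends only on $(\x_1, \x_2)$, so it contributes $\tau f(\a, \b)^k$; and $f \cdot p$ contributes $f(\a) \cdot p(\cdot) = 0$. Therefore $k! \cdot \tau f(\a, \b)^k = 0$ in $R$, and since $R$ is a reduced $\QQ$-algebra, $\tau f(\a, \b) = 0$, as desired.

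The main obstacle is selecting the right evaluation point. The obvious choice $\nabla^k \a$ from Lemma \ref{radic}(1) only yields the uninformative identity $k! (Df(\a))^k = 0$, which says nothing about $\tau f(\a, \b)$. The trick is to instead regard $\tau f^k$ as a $\D$-polynomial in the $2n$-tuple $(\x_1, \x_2)$ and $D$-prolong from the base $(\a, \b)$: this brings $\b$ into the evaluation in exactly the right place and reduces the left-hand side to $D^{k-1}(\tau f^k)(\a, \b)$, which then vanishes automatically from the $D$-closedness of $\I((\a, \b)/R)$.
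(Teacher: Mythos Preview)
Your proof is correct and follows essentially the same route as the paper's: reduce to $[A]$ via the first paragraph of Proposition~\ref{exten5}, pick $k$ with $f^k\in[A]$, evaluate the identity of Lemma~\ref{radic}(2) at $\nabla^{k-1}(\a,\b)$, kill the left-hand side via Lemma~\ref{radic}(1) applied to $\tau f^k$ in the $2n$-tuple $(\x_1,\x_2)$, and conclude using reducedness. Your justification that $f(\a)=0$ and your remark that the vanishing of $D^{k-1}(\tau f^k)(\a,\b)$ amounts to $D$-closedness of $\I((\a,\b)/R)$ make explicit points the paper leaves implicit, but the argument is otherwise the same.
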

\begin{proof}
By the first argument in the proof of Proposition \ref{exten5}, equation (\ref{rad}) holds for all elements in $[A]$. Let $f\in \{A\}$, since $R\{\x\}$ is also a $\QQ$-algebra $\{A\}=\sqrt{[A]}$, and so there is $k\geq 1$ such that $f^k\in [A]$ and hence $\tau f^k(\a,\b)=0$. By part (1) of Lemma \ref{radic}, $\tau^{k-1}(\tau f^k)(\nabla^{k-1}(\a,\b))=0$. Thus, by part (2) of Lemma \ref{radic}, we have
\begin{displaymath}
k!(\tau f)^k(\a,\b)+f(\a)p(\nabla^{k-1}(\a,\b))=0,
\end{displaymath}
for some $p\in R\{\x_1,\x_2,\dots,\x_{2^k}\}$. Since $f(\a)=0$, we get $k!(\tau f)^k(\a,\b)=0$. Thus, since $R$ is a reduced $\QQ$-algebra, $\tau (f)_{\a}(\b)=0$.
\end{proof}

\begin{corollary}
If $S$ is a $\D$-field, then Proposition \ref{exten5} holds even if we replace the assumption that $[A]=\I(\a/R)$ by $\{A\}=\I(\a/R)$.
\end{corollary}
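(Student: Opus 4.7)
The plan is to reduce to Proposition~\ref{exten5} by upgrading the vanishing hypothesis from $A$ to $\{A\}=\I(\a/R)$: once we know $\tau(f)_{\a}(\b)=0$ for every $f\in\I(\a/R)$, then since $\I(\a/R)$ is already a $\D$-ideal (so $[\I(\a/R)]=\I(\a/R)$), Proposition~\ref{exten5} applied with $\I(\a/R)$ in place of $A$ produces the desired $\D$-derivation $D'$. This upgrade is essentially the content of Proposition~\ref{better}, but that proposition is stated for a $\D$-derivation $D\colon R\to R$, whereas here $D$ only maps $R$ into the larger $\D$-field $S$; so the real task is to put ourselves in a position to invoke Proposition~\ref{better}.

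To do this, I would enlarge the codomain: embed $S$ in a differentially closed $\D$-field $K$ and use Corollary~\ref{Null} to extend $D$ to a $\D$-derivation $\widetilde D\colon K\to K$. Because the definition of $\tau f$ uses the derivation only on the coefficients of $f$, we have $\tau_{\widetilde D}f=\tau_{D}f$ for every $f\in R\{\x\}$, so the hypothesis $\tau(g)_{\a}(\b)=0$ for $g\in A$ carries over verbatim to $\widetilde D$. Proposition~\ref{better} then applies with $(K,\widetilde D)$ in place of $(R,D)$ (note that $K$ is a reduced $\QQ$-algebra and that $A\subseteq\I(\a/R)\subseteq\I(\a/K)$), and yields $\tau_{\widetilde D}(f)_{\a}(\b)=0$ for every $f$ in the radical $\D$-ideal of $K\{\x\}$ generated by $A$. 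This ideal contains the radical $\D$-ideal of $R\{\x\}$ generated by $A$, namely $\I(\a/R)$, which is exactly what is needed.

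Plugging this into Proposition~\ref{exten5} with $A$ replaced by $\I(\a/R)$ delivers a $\D$-derivation extending $D$ with the prescribed value at $\a$. That the resulting $D'$ actually lands in $S$, not just in $K$, is immediate from the explicit formula $D'f(\a)=\tau(f)_{\a}(\b)$ used in the construction, whose right-hand side visibly lies in $S$ because $\a,\b\in S$ and $D(R)\subseteq S$. The one step where I see any genuine work is the initial extension of $D$ to a self-derivation of a big enough $\D$-field, but that is precisely what Corollary~\ref{Null} together with the existence of differentially closed extensions is designed to handle.
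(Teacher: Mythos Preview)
Your proof is correct and follows essentially the same route as the paper: embed $S$ in a differentially closed $\D$-field $K$, use Corollary~\ref{Null} to extend $D$ to a $\D$-derivation $K\to K$, invoke Proposition~\ref{better} over $K$ to get vanishing on $\{A\}_K\supseteq\{A\}=\I(\a/R)$, and then apply Proposition~\ref{exten5}. You are in fact slightly more careful than the paper in noting why $\tau_{\widetilde D}f=\tau_D f$ on $R\{\x\}$ and why the resulting $D'$ takes values in $S$ rather than merely in $K$.
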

\begin{proof}
Suppose $\{A\}=\I(\a/R)$ and $\tau_{D/\D}(g)_{\a}(\b)=0$ for all $g\in A$. Let $(K,\D)$ be a differentially closed field extending $S$. By Corollary \ref{Null}, we can extend $D$ to a derivation $D':K \to K$. Now, by Proposition \ref{better}, $\tau_{D'/\D}(g)_{\a}(\b)=0$ holds for all $g\in \{A\}_K$, where $\{A\}_K$ denotes the radical $\D$-ideal in $K\{\x\}$ generated by $A$. But $\{A\}\subseteq \{A\}_K$, so that $\tau_{D/\D}(g)_{\a}=0$ for all $g\in \I(\a/R)$. Now apply Proposition ~\ref{exten5}.  
\end{proof}

\

\section{Relative prolongations and a characterization of $DCF_{0,m+1}$}

From now on we use freely the basic notions and terminology of model theory; we suggest \cite{Mar0} as a general reference. We work in the language of differential rings $\LL_m=\{0,1,+,-,\times,\d_1,\dots,\d_m\}$, and we let $\LL_0$ be the language of rings. We denote by $DF_{0,m}$ the theory of differential fields of characteristic zero with $m$ commuting derivations, and by $DCF_{0,m}$ its model-completion, the theory of differentially closed fields. In the ordinary case, when $m=1$, we write $DF_0$ and $DCF_0$ in place of $DF_{0,1}$ and $DCF_{0,1}$. The theory of algebraically closed fields of characteristic zero is denoted by $ACF_0$.

Let us recall the geometric axioms of $DCF_0$ given by Pierce and Pillay in \cite{PiPi}. Given a $\d$-field $K$ and $V$ a Zariski-closed set of $K^n$, the prolongation of $V$, $\tau V$, is the Zariski-closed subset of $K^{2n}$ defined by the equations $f=0$ and $\sum_{i=1}^n\frac{\dd f}{\dd x_i}(\x)y_i+f^{\d}(\x)=0$ for each polynomial $f\in K[\x]$ vanishing on $V$. Note that, in terms of our notation from Definition \ref{tauf}, the last equation is just $\tau_{\d/\emptyset}f(\x,\y)=0$.

\begin{fact}[Pierce-Pillay Axioms]\label{PPax}
Let $(K,\d)\models DF_0$. Then $(K,\d)\models DCF_0$ if and only if $K\models ACF_0$ and for each pair of irreducible Zariski-closed sets $V\subseteq K^n$ and $W\subseteq \tau V$ such that $W$ projects dominantly onto $V$, there is $\a\in V$ such that $(\a,\d\a)\in W$.
\end{fact}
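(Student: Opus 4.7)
The plan is to handle each direction separately, using Proposition \ref{exten5} (specialized to $\D = \emptyset$, $D = \d$) together with the fact that $DCF_0$ is the model companion of $DF_0$, so its models are precisely the differential fields that are existentially closed in their differential field extensions.

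For the forward direction, suppose $(K,\d)\models DCF_0$. Then $K\models ACF_0$ is part of $DCF_0$. Given irreducible $V$ and $W$ with $W\subseteq \tau V$ projecting dominantly onto $V$, I pick a finitely generated $\d$-subfield $k_0\subseteq K$ over which both are defined and choose a generic point $(\a,\b)$ of $W$ over $k_0$ in some algebraically closed extension. Dominance gives that $\a$ is generic in $V$ over $k_0$, so $\I(\a/k_0)=\I(V/k_0)$, and the inclusion $W\subseteq \tau V$ translates exactly to $\tau_{\d/\emptyset}f(\a,\b)=0$ for every $f\in \I(V/k_0)$. Proposition \ref{exten5} then produces a $\d$-derivation $\d'$ on $k_0[\a]$ extending $\d|_{k_0}$ with $\d'\a=\b$. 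The differential field $(k_0(\a),\d')$ witnesses the existential statement ``$\exists \x\in V:\;(\x,\d\x)\in W$''; since $(K,\d)$ is existentially closed in its differential field extensions (by amalgamation for $DF_0$ and model-completeness of $DCF_0$), the same statement holds in $(K,\d)$.

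For the converse, assume $K\models ACF_0$ satisfies the geometric property, and let $f_1=\cdots=f_s=0,\; g\neq 0$ be any consistent finite system of $\d$-polynomials over $K$; the goal is to find a solution in $K$. A standard localization trick (introduce a fresh indeterminate $z$ and adjoin the equation $zg-1=0$) reduces to the purely equational case. Let $p$ be the prime $\d$-ideal in $K\{\x\}$ generated by the enlarged system and $\eta$ its generic point in the $\d$-fraction field of $K\{\x\}/p$. Choose $d$ large enough that a fixed finite generating set of $p$ has order at most $d$, and set $V\subseteq K^{n(d+1)}$ and $W\subseteq K^{n(d+2)}$ to be the Zariski closures over $K$ of $(\eta,\d\eta,\dots,\d^d\eta)$ and $(\eta,\d\eta,\dots,\d^{d+1}\eta)$ respectively. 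Both are irreducible, as loci of single tuples.

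Under the closed embedding $(y_0,\dots,y_{d+1})\mapsto (y_0,\dots,y_d,y_1,\dots,y_{d+1})$ of the ambient space of $W$ into $K^{2n(d+1)}$, one checks that $W\subseteq \tau V$: for each $f\in \I(V/K)$, the identity $\d f(\eta,\dots,\d^d\eta)=0$ is precisely the vanishing of $\tau_{\d/\emptyset}f$ at the image tuple. Dominance of the projection $W\to V$ is automatic, since the generic points agree on the first $n(d+1)$ coordinates. The geometric hypothesis then delivers $\a=(a_0,\dots,a_d)\in V(K)$ with $(\a,\d\a)\in W$; tracing back through the identification forces $a_{i+1}=\d a_i$ for $i<d$, so the resulting tuple $(a_0,\d a_0,\dots,\d^{d+1}a_0)\in W(K)$ satisfies every polynomial relation over $K$ that $(\eta,\dots,\d^{d+1}\eta)$ does, and in particular $a_0$ is a zero of $f_1,\dots,f_s$ (with $g(a_0)\neq 0$ after unwinding the $z$-reduction). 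The main obstacle is the converse direction: one must engineer $V$ and $W$ so that the purely algebraic inclusion $W\subseteq \tau V$ faithfully encodes the differential data of $\eta$, which becomes transparent only after truncating at a sufficiently large order $d$.
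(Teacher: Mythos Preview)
Your argument is essentially correct and mirrors the paper's proof of the generalization, Theorem~\ref{maintheo}, specialized to $m=0$ (the paper itself only cites \cite{PiPi} for Fact~\ref{PPax}). The forward direction via Proposition~\ref{exten5} and the converse via the Zariski loci of the jets $(\eta,\d\eta,\dots)$ of a generic solution are exactly the paper's two steps.

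Two small points of imprecision worth tightening. In the forward direction, $\d'\a=\b$ need not lie in $k_0(\a)$, so ``the differential field $(k_0(\a),\d')$'' is not quite a differential field; you should extend $\d'$ to (say) the algebraic closure of $k_0(\a,\b)$ before amalgamating with $K$ over $k_0$. The paper avoids this by working inside a saturated elementary extension of $(K,\D)$ and invoking Corollary~\ref{Null} to extend the derivation globally---your amalgamation route is a legitimate variant. In the converse, the $\d$-ideal \emph{generated} by a consistent system need not be prime; take $p$ to be a prime $\d$-ideal containing the system, or equivalently (as in the paper) fix a solution $\a$ in some extension and set $\eta=\a$, $p=\I(\a/K)$.
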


The Pierce-Pillay characterization of $DCF_0$ is indeed first-order. Expressing irreducibility of a Zariski-closed set as a definable condition on the parameters uses the existence of bounds to check primality of ideals in polynomial rings in finitely many variables \cite{Van}. Also, if the field is algebraically closed, one can find a first-order formula, in the language of rings, describing for which parameters a Zariski-closed set projects dominantly onto some fixed irreducible Zariski-closed set. This uses the fact that in a model of $ACF_0$ the Morley rank of a definable set agrees with the dimension of its Zariski-closure. 

The goal of this section is to extend the Pierce-Pillay axioms, in an appropriate sense, to the context of several commuting derivations. Our approach is to accomplish this by characterizing $DCF_{0,m+1}$ in terms of the geometry of $DCF_{0,m}$. The Pierce-Pillay axioms are then the $m=0$ case (under the convention $DCF_{0,0}=ACF_0$).

For the rest of this section we fix a differential field $(K,\D\cup\{D\})$ with $\D=\{\d_1,\dots,\d_m\}$, and $V\subseteq K^n$ a $\D$-closed set.

\begin{definition}\label{prolong}
We define the $D/\D$-prolongation of $V$, $\tau_{D/\D}V\subseteq K^{2n}$, to be the $\D$-closed set defined by 
\begin{displaymath}
f=0 \textrm{ and } \tau_{D/\D}f=0, \textrm{ for all } f\in \I(V/K).
\end{displaymath}
Here $\I(V/K)=\{f\in K\{\x\} : f$ vanishes on $V\}$. When $\D$ and $D$ are understood, we just write $\tau f$ and $\tau V$. For $\a\in V$, $\tau (V)_{\a}$ denotes the fibre of $\tau V$ at $\a$. Note that when $m=0$ this is just the usual prolongation.
\end{definition}
By the first equality of Lemma \ref{exten1}, if $\a$ is in $V$ then $(\a,D\a)\in \tau V$. In particular the projection $\pi : \tau V\to V$ given by $\pi(\a,\b)=\a$ is surjective. 

Suppose $A\subseteq K\{\x\}$ is such that $[A]=\I(V/K)$. Then $\tau V$ is defined by $f=0$ and $\tau f=0$ for all $f\in A$. Indeed, this follows from the first argument in the proof of Proposition \ref{exten5}. Moreover, the following consequence of Proposition \ref{better} implies that the $D/\D$-prolongation varies uniformly with $V$.

\begin{proposition}\label{prolim}
Suppose $(K,\D)\models DCF_{0,m}$. If $V=\Z(f_1,\dots,f_s):=\{\a\in K^n : f_i(\a)=0, \, i=1,\dots,s\}$, then $\tau V=\Z(f_i,\tau f_i: i=1,\dots,s)$.
\end{proposition}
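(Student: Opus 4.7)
My plan is to prove this by double inclusion, with the easy direction being immediate from the definition and the hard direction reducing to Proposition~\ref{better} via the differential Nullstellensatz.

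For the inclusion $\tau V \subseteq \mathcal{V}(f_i, \tau f_i : i=1,\dots,s)$, I would just note that each $f_i$ lies in $\mathcal{I}(V/K)$, so by Definition~\ref{prolong} any point of $\tau V$ satisfies $f_i = 0$ and $\tau f_i = 0$. Nothing more is needed here.

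For the reverse inclusion, let $(\bar a, \bar b) \in \mathcal{V}(f_i, \tau f_i : i=1,\dots,s)$. The goal is to verify that $f(\bar a) = 0$ and $\tau(f)_{\bar a}(\bar b) = 0$ for every $f \in \mathcal{I}(V/K)$. Since $(K,\Delta) \models DCF_{0,m}$, the $\Delta$-Nullstellensatz applies, so $\mathcal{I}(V/K) = \{f_1,\dots,f_s\}$ (the radical $\Delta$-ideal generated by the $f_i$'s in $K\{\bar x\}$). This takes care of the statement $f(\bar a)=0$ automatically, since $\bar a \in V$. For the other condition, I apply Proposition~\ref{better} with $R = K$ (a reduced $\QQ$-algebra), $A = \{f_1,\dots,f_s\} \subseteq \mathcal{I}(\bar a / K)$, and $\bar b$ as given; the hypothesis $\tau(g)_{\bar a}(\bar b) = 0$ for $g \in A$ is exactly our assumption. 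The conclusion is that $\tau(f)_{\bar a}(\bar b) = 0$ for all $f \in \{A\} = \mathcal{I}(V/K)$, which is precisely what is needed to place $(\bar a, \bar b)$ in $\tau V$.

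The main obstacle, such as it is, is verifying that the hypotheses of Proposition~\ref{better} apply cleanly: we need $D$ to be a $\Delta$-derivation from $R$ into $R$ (fine, since we fixed the $\Delta \cup \{D\}$-field $K$ at the start of the section and can take $R = K$), and we need the identification of $\mathcal{I}(V/K)$ with the radical $\Delta$-ideal generated by the $f_i$'s. This last point is where the $DCF_{0,m}$-hypothesis genuinely enters: without it we only know $\{f_1,\dots,f_s\} \subseteq \mathcal{I}(V/K)$ and the equality could fail.
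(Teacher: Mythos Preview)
Your proof is correct and follows essentially the same route as the paper: the easy inclusion is immediate from the definition, and the reverse inclusion combines Proposition~\ref{better} (applied with $R=K$ and $A=\{f_1,\dots,f_s\}$) with the differential Nullstellensatz for $DCF_{0,m}$ to identify $\{f_1,\dots,f_s\}$ with $\mathcal I(V/K)$. Your added remarks about why the hypotheses of Proposition~\ref{better} are met are accurate and match the paper's implicit reasoning.
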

\begin{proof}
Clearly $\tau V\subseteq \Z(f_i,\tau f_i: i=1,\dots,s)$. Let $(\a,\b)\in \Z(f_i,\tau f_i: i=1,\dots,s)$. By Proposition \ref{better}, $\tau f(\a,\b)=0$ for all $f\in \{f_1,\dots, f_s\}$. Since $(K,\D)\models DCF_{0,m}$, we have $\{f_1,\dots,f_s\}=\I(\Z(f_1,\dots,f_s))=\I(V)$. Hence, $(\a,\b)\in \tau V$.
\end{proof}

\begin{remark} \label{rem} \
\begin{enumerate}
\item Suppose $(K,\D)\models DCF_{0,m}$. If $V$ is defined over the $D$-constants, that is, $V=\Z(f_1,\dots,f_s)$ where $f_i\in \C_D\{\x\}$, then $\tau V$ is just Kolchin's $\D$-tangent bundle of $V$. Indeed, by Proposition \ref{prolim}, the equations defining $\tau V$ become $f_i(\x)=0$ and $\tau f_i(\x,\y)=df_i(\x)\cdot \t\y=0$, $i=1,\dots,s$. These are exactly the equations for Kolchin's $\D$-tangent bundle $T_\D V$ (\cite{Kol}, Chap.VIII, \S 2). 
\item In general, $\tau V$ is a torsor under $T_\D V$. Indeed, from the equations one sees that $\tau(V)_{\a}$ is a translate of $T_\D(V)_{\a}$, and so the map $T_\D V\times_V \tau V\to \tau V$ given by $((\a,\b),(\a,\c))\mapsto (\a,\b+\c)$ is a regular action of $T_\D V $ on $\tau V$ over ~$V$.
\end{enumerate}
\end{remark}

Note that in case $\D=\emptyset$, part $(2)$ of Remark \ref{rem} reduces to the fact that the prolongation of a Zariski-closed set is a torsor under its tangent bundle.

Here is our extension of the Pierce-Pillay characterization to several commuting derivations.

\begin{theorem}\label{maintheo}
Suppose $(K,\D\cup\{D\})\models DF_{0,m+1}$. Then $(K,\D\cup\{D\})\models DCF_{0,m+1}$ if and only if
\begin{enumerate}
\item $(K,\D)\models DCF_{0,m}$
\item For each pair of irreducible $\D$-closed sets $V\subseteq K^n$, $W\subseteq \tau V$ such that $W$ projects $\D$-dominantly onto $V$. If $O_V$ and $O_W$ are nonempty $\D$-open subsets of $V$ and $W$ respectively, then there exists $\a\in O_V$ such that $(\a,D\a)\in O_W$. 
\end{enumerate}
\end{theorem}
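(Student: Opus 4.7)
\emph{Forward direction.} Assume $(K,\D\cup\{D\})\models DCF_{0,m+1}$. For (1), given a quantifier-free $\LL_m$-formula $\phi(\x)$ over $K$ realized in some $\D$-extension $L$ of $K$, embed $L$ into some $(L^*,\D)\models DCF_{0,m}$ and use Corollary \ref{Null} to extend $D\colon K\to K$ to a $\D$-derivation $D^*\colon L^*\to L^*$. Then $(L^*,\D\cup\{D^*\})$ is a $\D\cup\{D\}$-extension of $K$ realizing $\phi$, and existential closedness of $K$ in $\LL_{m+1}$ yields a realization in $K$; hence $(K,\D)\models DCF_{0,m}$. For (2), take a $\D$-generic point $(\a,\b)$ of $W$ over $K$ in some $\D$-field extension $L$. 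By $\D$-dominance, $\a$ is a $\D$-generic of $V$ over $K$; hence $\a\in O_V$, $(\a,\b)\in O_W$, and $\I_\D(\a/K)=\I(V/K)$. Since $(\a,\b)\in\tau V$, Proposition \ref{exten5} extends $D$ to a $\D$-derivation $D'\colon K\{\a\}\to L$ with $D'\a=\b$, and Corollary \ref{Null} extends $D'$ further, turning $L$ into a $\D\cup\{D\}$-extension of $K$ in which the existential sentence $\exists\x(\x\in O_V\wedge(\x,D\x)\in O_W)$ holds. By model-completeness, the same sentence holds in $K$.

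\emph{Backward direction.} Assume (1) and (2); we show $(K,\D\cup\{D\})$ is existentially closed. Let $\phi(\x)$ be a quantifier-free $\LL_{m+1}$-formula over $K$ realized by $\a$ in some $\D\cup\{D\}$-extension $L$. Choose $k$ so large that $\phi$ mentions only the derivatives $D^i\x$ for $i\le k$; then $\phi(\x)$ is equivalent to a quantifier-free $\LL_m$-formula $\psi(\nabla^k\x)$. Set $\u:=\nabla^k\a$, let $V$ be the $\D$-locus of $\u$ over $K$, and let $W$ be the $\D$-locus of $\nabla^{k+1}\a=(\u,D\u)$ over $K$. Then $V$ and $W$ are $\D$-irreducible, $W\subseteq\tau V$ by Lemma \ref{exten1}, and the projection $W\to V$ is $\D$-dominant. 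The inequations in a realized disjunct of $\psi$ carve out a nonempty $\D$-open subset $O_V\subseteq V$. Applying condition (2) with $O_W:=W$ yields $\u'\in O_V(K)$ with $(\u',D\u')\in W(K)$. The coordinate tuple $\nabla^{k+1}\a$ exhibits algebraic redundancies (a coordinate of $D\u$ and a coordinate of $\u$ coincide whenever both represent the same $D^j\a$), and these linear identities lie in $\I(W/K)$; they therefore force $\u'=\nabla^k\a'$, where $\a'\in K^n$ denotes the first $n$ coordinates of $\u'$. Hence $\phi(\a')\Leftrightarrow\psi(\u')$ holds in $K$.

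\emph{Expected obstacle.} The delicate step is in the backward direction: identifying the algebraic redundancies built into $\nabla^{k+1}\a$, verifying they belong to $\I(W/K)$, and using them to deduce that the $K$-point $\u'$ produced by (2) has the form $\nabla^k\a'$ for some $\a'\in K^n$. A secondary point is checking that the inequations of $\psi$ cut out a genuinely $K$-definable nonempty $\D$-open subset of $V$; this uses that $\u$ is a $\D$-generic of $V$ over $K$, so that the equality part of $\psi$ automatically lies in $\I(V/K)$.
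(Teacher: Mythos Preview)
Your argument is correct and matches the paper's proof in all essentials: the forward direction uses Proposition~\ref{exten5} and Corollary~\ref{Null} to manufacture a $\D\cup\{D\}$-extension realizing the open conditions, and the backward direction passes to the $\D$-locus of a $D$-jet of $\a$ and exploits the linear redundancies in $\I(W/K)$ to recover the correct shape of the $K$-point. The paper streamlines your bookkeeping by working with the non-redundant tuple $\c=(\a,D\a,\dots,D^{r-1}\a)$ rather than $\nabla^k\a$ (encoding the identities as $\x_i=\y_{i-1}$ in an auxiliary formula $\chi$) and by restricting to conjunctions of atomic formulas so that $O_V=V$ and $O_W=W$ suffice; one small point to tighten is that in your forward direction $L$ must be taken to be a model of $DCF_{0,m}$ for Corollary~\ref{Null} to apply.
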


As we will see in the proof, it would have been equivalent in condition (2) to take $O_V=V$ and $O_W=W$. Also note that when $m=0$, the theorem is exactly Fact \ref{PPax}.

\begin{proof}
Suppose $(K,\D\cup\{D\})\models DCF_{0,m+1}$, and $V$, $W$, $O_V$ and $O_W$ are as in condition (2). Let $(\UU,\D)$ be a $|K|^+$-saturated elementary extension of $(K,\D)$. If $X$ is an ($\LL_{m}$-)definable subset of $K^n$, by $X(\UU)$ we mean the interpretation of $X$ in $\UU^n$. Let $(\a,\b)\in \UU^{2n}$ be a $\D$-generic point of $W$ over $K$; that is, $\I(\a,\b/K)=\I(W(\UU)/K)$. Then $(\a,\b)\in O_W(\UU)$. Since $(\a,\b)\in \tau V(\UU)$ we have that $\tau (f)_{\a}(\b)=0$ for all $f\in \I(V(\UU)/K)$. The fact that $W$ projects $\D$-dominantly onto $V$ implies that $\a$ is a $\D$-generic point of $V$ over $K$, so $\a\in O_V(\UU)$ and $\I(\a/K)=\I(V(\UU)/K)$. Hence, $\tau (f)_{\a}(\b)=0$ for all $f\in \I(\a/K)$. By Proposition \ref{exten5}, there is a unique $\D$-derivation $D': K\{\a\}\to \UU$ extending $D$ such that $D' \a=\b$. By Corollary \ref{Null}, we can extend $D'$ to all of $\UU$, call it $D''$. Hence, $\UU$ becomes a $\D \cup \{D''\}$-field extending the $\D\cup\{D\}$-closed field $K$. Since $\a\in O_V(\UU)$, $(\a,\b)\in  O_W(\UU)$ and $D'' \a=\b$, we get a point $(\a',\b')$ in $K$ such that $\a'\in O_V$, $(\a',\b')\in O_W$ and $D\a'=\b'$.

The converse is essentially as in \cite{PiPi}. Let $\phi(\x)$ be a conjunction of atomic $\LL_{m+1}$-formulas over $K$. Suppose $\phi$ has a realisation $\a$ in some $(F,\D\cup\{D\})\models DF_{0,m+1}$ extending of $(K,\D\cup\{D\})$. Let
\begin{displaymath}
\phi(\x)=\psi(\x,\d_{m+1}\x,\dots,\d_{m+1}^r \x)
\end{displaymath}
where $\psi$ is a conjunction of atomic $\LL_m$-formulas over $K$ and $r>0$. Let $\c=(\a,D \a,\dots,D^{r-1}\a)$ and $X\subseteq F^{nr}$ be the $\D$-locus of $\c$ over $K$. Let $Y\subseteq F^{2nr}$ be the $\D$-locus of $(\c,D \c)$ over $K$. Let
\begin{displaymath}
\chi(\x_0,\dots,\x_{r-1},\y_0,\dots,\y_{r-1}) :=\psi(\x_0,\dots,\x_{r-1},\y_{r-1}) \land \left(\land_{i=1}^{r-1}\x_i = \y_{i-1}\right)
\end{displaymath}
then $\chi$ is realised by $(\c,D\c)$. Since $(\c,D\c)$ is a $\D$-generic point of $Y$ over $K$ and its projection $\c$ is a $\D$-generic point of $X$ over $K$, we have that $Y$ projects $\D$-dominantly onto $X$ over $K$. Thus, since $(K,\D)\models DCF_{0,m}$, $Y(K)$ projects $\D$-dominantly onto $X(K)$. Also, since $(\c,D \c)\in\tau X$, we have $Y(K)\subseteq\tau (X(K))$. Applying (2) with $V=O_V=X(K)$ and $W=O_W=Y(K)$, there is $\bar d$ in $V$ such that $(\bar d,D\bar d)\in W$. Let $\bar d=(\bar d_0,\dots,\bar d_{r-1})$ then $(\bar d_0,\dots,\bar d_{r-1},D\bar d_0,\dots,D \bar d_{r-1})$ realises $\chi$. Thus, $(\bar d_0, D \bar d_0,\dots,D^r \bar d_0)$ realises $\psi$. Hence, $\bar d_0$ is a tuple of $K$ realising $\phi$. This proves that $(K,\D\cup\{D\})\models ~DCF_{0,m+1}$.
\end{proof}

\

\section{Making the geometric characterization first-order}

It is not known to the author if condition (2) of Theorem \ref{maintheo} can be expressed in a first-order way. One issue is to express irreducibility of $\D$-closed sets as a definable condition. This seems to be an open problem related to the generalized Ritt problem \cite{Ov}. The other issue is how to express when a $\D$-closed set projects $\D$-dominantly onto another $\D$-closed set as a definable condition. Unlike the algebraic case, in differentially closed fields, Morley rank does not concide with the Krull-Noetherian geometric dimension (see the example given in \S 2.5 of \cite{Mar2}). 

We resolve the problem in this section by modifying the characterization of Theorem \ref{maintheo} so that it no longer mentions irreducibility or dominance. The first of these can be handled rather easily by the following lemma. 

\begin{lemma}\label{star}
Let $K$ be a $\D\cup\{D\}$-field. Let $V\subseteq K^n$ be a $\D$-closed set with $K$-irreducible components $\{V_1,\dots,V_s\}$. If $\a \in V_i\backslash\bigcup_{j\neq i}V_j$, then $\tau(V)_{\a}=\tau(V_i)_{\a}$.
\end{lemma}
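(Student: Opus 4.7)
The plan is to prove the two inclusions separately, with the nontrivial one handled by a localization argument that exploits the Leibniz rule for $\tau$.

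The easy inclusion $\tau(V_i)_{\a} \subseteq \tau(V)_{\a}$ is immediate: since $V \supseteq V_i$, we have $\I(V/K) \subseteq \I(V_i/K)$, so every polynomial cutting out $\tau V$ is among those cutting out $\tau V_i$, giving $\tau V_i \subseteq \tau V$. Taking fibres over $\a$ preserves the containment.

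For the reverse inclusion, fix $\b \in \tau(V)_{\a}$ and an arbitrary $f \in \I(V_i/K)$. I must show $f(\a) = 0$ and $\tau f(\a,\b) = 0$. The first is automatic since $\a \in V_i$. For the second, I exploit that $\a$ lies off the other components: for each $j \neq i$, choose $h_j \in \I(V_j/K)$ with $h_j(\a) \neq 0$, and set $h := \prod_{j \neq i} h_j$. Then $h(\a) \neq 0$ and $h \in \I(V_j/K)$ for every $j \neq i$, whereas $f \in \I(V_i/K)$. Multiplying, $hf$ lies in every $\I(V_j/K)$, hence in $\I(V/K) = \bigcap_j \I(V_j/K)$. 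Because $(\a,\b) \in \tau V$, this forces $\tau(hf)(\a,\b) = 0$.

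Now I apply Lemma \ref{exten3}, which says $\tau$ is a $\D$-derivation, so the Leibniz rule gives $\tau(hf) = (\tau h)\,f + h\,(\tau f)$. Evaluating at $(\a,\b)$ and using $f(\a)=0$ collapses the first summand, leaving $h(\a)\cdot\tau f(\a,\b) = 0$. Since $h(\a) \neq 0$ and $K$ is a field, $\tau f(\a,\b) = 0$, as required. There is no serious obstacle here; the only thing one must see is that the ``partition-of-unity'' trick familiar from classical algebraic geometry carries over to the $D/\D$-prolongation precisely because $\tau$ satisfies the Leibniz rule over $K\{\x\}$.
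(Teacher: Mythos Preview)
Your proof is correct and follows essentially the same argument as the paper: both pick, for each $j\neq i$, an element of $\I(V_j/K)$ not vanishing at $\a$, multiply their product against $f$ to land in $\I(V/K)$, and then use the Leibniz rule for $\tau$ (Lemma~\ref{exten3}) together with $f(\a)=0$ to cancel down to $\tau f(\a,\b)=0$. The only cosmetic difference is that you make the identity $\I(V/K)=\bigcap_j \I(V_j/K)$ explicit, whereas the paper leaves it implicit.
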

\begin{proof}
Clearly $\tau(V_i)_{\a}\subseteq\tau(V)_{\a}$. Let $\b\in\tau(V)_{\a}$ and $f\in \I(V_i/K)$. Since $\a$ is not in $V_j$, for $j\neq i$, we can pick a $g_j\in \I(V_j/K)$ such that $g_j(\a)\neq 0$. Then, if $g=\prod_j g_j$, we get $fg\in \I(V/K)$ and so
\begin{displaymath}
0= \tau(fg)_{\a}(\b) = \tau(f)_{\a}(\b)g(\a)+f(\a)\tau(g)_{\a}(\b) = \tau(f)_{\a}(\b)g(\a)
\end{displaymath}
where the third equality holds because $\a\in V_i$. Since $g(\a)\neq 0$, we have $\tau(f)_{\a}(\b)=0$, and so $\b\in\tau(V_i)_{\a}$.
\end{proof}

It follows that if $W\subseteq \tau V$ projects $\D$-dominantly onto $V$ and $V_i$ is a $K$-irreducible component of $V$, then a $K$-irreducible component of $W\cap \tau V_i$ projects $\D$-dominantly onto $V_i$.

The second issue, that of $\D$-dominant projections, is more difficult to deal with. Let us note here that when $\D=\emptyset$, that is, in the case of $DCF_0$, one can just replace dominant projections by surjective projections in the Pierce-Pillay axiomatization. Indeed this reformulation is stated in \cite{Pi}. We will not give a proof here as it will follow from Theorem \ref{main2} below. However, what makes this work is the fact that if $a$ is $D$-algebraic over $K$, then $D^{k+1} a \in K(a,D a, \dots, D^k a)$ for some $k$. In several derivations it is not necessarily the case that if $a$ is $\D\cup\{D\}$-algebraic over $K$, then $D^{k+1} a$ is in the $\D$-field generated by $a,Da,\dots,D^k a$ over $K$, for some $k$. However, by a theorem of Kolchin (Fact \ref{good1} below), this can always be achieved if we allow $\QQ$-linear transformations of the derivations. Our modification of Theorem \ref{maintheo} will therefore need to refer to such transformations.

For every $M=(c_{i,j})\in GL_{m+1}(\QQ)$, let $\D'=\{\d'_1,\dots,\d'_m\}$ and $D'$ be the derivations on $K$ defined by $\d'_i= c_{i,1}\d_1+\dots+c_{i,m}\d_m+c_{i,m+1}D$ and $D'=c_{m+1,1}\d_1+\dots+c_{m+1,m}\d_m+c_{m+1,m+1}D$. In this case we write $(\D',D')=M(\D,D)$. Clearly, the elements of $\D'\cup\{D'\}$ are also commuting derivations on $K$.

\begin{fact}[\cite{Ko}, Chap. II, \S 11]\label{good1}
Let $(K,\D\cup\{D\})\models DF_{0,m+1}$. Let $\a=(a_1,\dots,a_n)$ be a tuple of a $\D\cup\{D\}$-field extension of $K$. Suppose all the $a_i$'s are $\D\cup\{D\}$-algebraic over $K$, then there exists $k>0$ and a matrix $M\in GL_{m+1}(\QQ)$ such that, writing $(\D',D')=M(\D,D)$, we have that $D^{\ell}\a$ is in the $\D'$-field generated by $\a,D'\a\dots,D'^k\a$ over $K$, for all $\ell>k$.
\end{fact}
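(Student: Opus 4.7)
The plan is to express the defining prime differential ideal of $\a$ via characteristic sets, and then show that after a generic linear change of derivations one can find a characteristic set whose leaders are pure $D'$-iterates $D'^{k_i}a_i$; this yields a recursive expression for the $D'$-iterates of $\a$, from which the conclusion follows.

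I would begin by letting $I := \I(\a/K) \subseteq K\{x_1,\dots,x_n\}$ (differential polynomials in the $m+1$ commuting derivations $\D\cup\{D\}$) denote the defining prime differential ideal of $\a$. The hypothesis that each $a_i$ is $\D\cup\{D\}$-algebraic says that the differential transcendence degree of $K\l\a\r$ over $K$ is zero; equivalently, for any orderly ranking and any characteristic set $C$ of $I$, the set $\Lambda_i \subseteq \NN^{m+1}$ of multi-indices $\nu$ with $\dd^\nu x_i$ a leader of some element of $C$ is nonempty, for each $i$. The genericity claim I would need is that there is a proper Zariski-closed subset $Z \subset GL_{m+1}(\QQ)$, depending only on the $\Lambda_i$, such that for $M\notin Z$, writing $(\D',D') = M(\D,D)$ and using a new orderly ranking in which $D'$ dominates $\d'_1,\dots,\d'_m$, the ideal $I$ admits a characteristic set whose leaders are precisely $D'^{k_1}a_1,\dots,D'^{k_n}a_n$ for some $k_i\geq 1$. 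The intuition is that under the $GL_{m+1}(\QQ)$-action the ``$D'$-axis'' in $\NN^{m+1}$ generically meets each staircase $\T\Lambda_i$ in a unique minimum; proving this genericity rigorously is the main obstacle and is the combinatorial heart of the Kolchin theorem being cited.

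Given such an $M$, set $k := \max_i k_i$ and $F := K\l\a, D'\a,\dots, D'^{k}\a\r_{\D'}$. For each $i$, the characteristic-set polynomial $f_i$ with leader $D'^{k_i}a_i$ has every strictly lower-ranked derivative of the form $\theta D'^{j}a_i$ with $\theta$ a product of $\d'_l$'s and $j < k_i$, hence in $F$; applying $D'$ to $f_i(\a) = 0$ gives a linear equation in $D'^{k_i+1}a_i$ with coefficient the separant of $f_i$ evaluated at $\a$, which is nonzero because the separants of characteristic-set polynomials of a prime ideal lie outside that ideal. Solving yields $D'^{k_i+1}a_i$ as a rational expression in derivatives of $D'$-order at most $k_i$, all of which lie in $F$; substituting this recursion into further iterates of $D'$ gives $D'^{\ell}a_i \in F$ for every $\ell > k_i$, and hence for every $\ell > k$. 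Finally, since $D$ is a $\QQ$-linear combination of $\D'\cup\{D'\}$, expanding $D^\ell\a$ writes it as a sum of terms $\theta D'^{q}\a$ with $\theta$ a monomial in $\d'_1,\dots,\d'_m$ and $q\leq\ell$; each such term lies in $F$, either as a $\D'$-derivative of a seed (when $q\leq k$) or by the previous step combined with the closure of $F$ under $\D'$ (when $q > k$).
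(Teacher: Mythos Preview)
The paper does not prove this statement; it is recorded as a Fact with a citation to \cite{Ko}, Chapter~II, \S 11, and no argument is supplied. So there is nothing in the paper to compare your proposal against directly.

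Your outline is in fact the shape of Kolchin's own argument: pass to a characteristic set of the defining prime, perform a linear change of derivations so that for each $i$ some element of the characteristic set has leader a pure power $D'^{k_i}x_i$, and then use those elements to express higher $D'$-iterates recursively. However, the step you yourself label ``the main obstacle'' is the entire content of the result, and you do not carry it out. The intuition you give (a generic axis meeting a fixed staircase) is also not quite the right picture: the set $\Lambda_i$ of leader exponents depends on the ranking, which you are changing simultaneously with the derivations, so one cannot treat the staircase as fixed while $M$ varies. What one actually argues is that for generic $M$ the differential polynomial witnessing the $\D\cup\{D\}$-algebraicity of each $a_i$ acquires $D'^{d_i}x_i$ as its leader in the new ranking (a nonvanishing condition on a polynomial in the entries of $M$), and hence any characteristic set of $I$ in the new ranking must contain an element with leader $D'^{k_i}x_i$ for some $k_i\le d_i$. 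This is not deep, but it is the work, and your proposal skips it.

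There is also a slip in the recursion step. The lower-ranked derivatives appearing in $f_i$ are not only of the form $\theta D'^{j}a_i$; they may involve any variable $a_l$, and in particular $D'^{k_i}a_l$ for $l<i$ can occur (since in an orderly ranking with $D'$ dominant this is lower than $D'^{k_i}a_i$). Applying $D'$ then produces $D'^{k_i+1}a_l$, so the linear system you solve for the $D'^{k_i+1}a_i$ is triangular in $i$, not decoupled. Setting $k=\max_i k_i$ and showing $D'F\subseteq F$ directly takes care of this, but your write-up treats the variables as uncoupled.
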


Theorem \ref{maintheo} characterizes $DCF_{0,m+1}$ in terms of the geometry of $DCF_{0,m}$. The idea, of course, was that $DCF_{0,m}$ has a similar characterization relative to $DCF_{0,m-1}$, and so on. In our final formulation (Theorem \ref{main2} below) we will implement this recursion and give, once and for all, a geometric first-order axiomatization of $DCF_{0,m+1}$ for all $m\geq 0$, that refers only to the base theory $ACF_0$.

\begin{theorem}\label{main2}
Suppose $(K,\D\cup\{D\})\models DF_{0,m+1}$. Then $(K,\D\cup\{D\})\models DCF_{0,m+1}$ if and only if 
\begin{enumerate}
\item $K\models ACF_0$
\item Suppose $M\in GL_{m+1}(\QQ)$, $(\D',D'):=M(\D,D)$, $V=\Z(f_1,\dots,f_s)\subseteq K^n$ is a nonempty $\D'$-closed set,
and 
\begin{displaymath}
W\subseteq \Z(f_1,\dots,f_s,\tau_{D'/\D'}f_1,\dots,\tau_{D'/\D'}f_s)\subseteq K^{2n}
\end{displaymath}
is a $\D'$-closed set that projects onto $V$. Then there is $\a\in V$ such that $(\a,D'\a)\in W$.
\end{enumerate}
\end{theorem}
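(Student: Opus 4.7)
My approach addresses the two directions of the biconditional separately.

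(Forward) Assume $(K,\D\cup\{D\})\models DCF_{0,m+1}$. Any $\QQ$-linear change of basis preserves commutativity, so $(K,\D'\cup\{D'\})\models DCF_{0,m+1}$ and hence $(K,\D')\models DCF_{0,m}$. Proposition \ref{prolim} identifies $\Z(f_i,\tau_{D'/\D'}f_i)$ with $\tau_{D'/\D'}V$, so $W\subseteq\tau_{D'/\D'}V$. Decompose $V$ and $W$ into their $K$-irreducible $\D'$-components $V_1,\dots,V_t$ and $W_1,\dots,W_\ell$. Since $\pi(W)=V$ and each $V_i$ is irreducible, there exists $j(i)$ with $\overline{\pi(W_{j(i)})}=V_i$. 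Lemma \ref{star} applied at a generic point of $V_i$ gives $W_{j(i)}\subseteq\tau_{D'/\D'}V_i$, so Theorem \ref{maintheo}(2) applied to $(V_i,W_{j(i)})$ with $O_{V_i}=V_i\setminus\bigcup_{i'\neq i}V_{i'}$ and $O_{W_{j(i)}}=W_{j(i)}$ yields $\a\in V_i$ with $(\a,D'\a)\in W_{j(i)}\subseteq W$.

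(Converse) I argue by induction on $m$; for $m=0$, $DCF_{0,m}$ is $ACF_0$ (hypothesis (1)) and the inductive step below applies verbatim, handling the classical Pierce-Pillay case uniformly. In the inductive step, I first establish $(K,\D)\models DCF_{0,m}$. View $(K,\D)$ as a structure with base derivations $\{\d_1,\dots,\d_{m-1}\}$ and extra derivation $\d_m$. Given $M'\in GL_m(\QQ)$, form the block matrix $\tilde M\in GL_{m+1}(\QQ)$ with $\tilde M(\d_1,\dots,\d_m,D)=(\d_1'',\dots,\d_{m-1}'',D,\d_m'')$, where $(\d_1'',\dots,\d_m'')=M'(\d_1,\dots,\d_m)$. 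Any $\{\d_1'',\dots,\d_{m-1}''\}$-closed set $V=\Z(f_1,\dots,f_s)$ (with $f_i$ not involving $D$-derivatives) is also $\{\d_1'',\dots,\d_{m-1}'',D\}$-closed via the same equations, and the Jacobian $df_i$ has no $D$-derivative components, so $\tau_{\d_m''/\{\d_1'',\dots,\d_{m-1}'',D\}}f_i=\tau_{\d_m''/\{\d_1'',\dots,\d_{m-1}''\}}f_i$. Thus our assumed (2) with $\tilde M$ delivers the required axiom for $(K,\D)$, and the inductive hypothesis yields $(K,\D)\models DCF_{0,m}$.

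With $(K,\D)\models DCF_{0,m}$ in hand, I verify Theorem \ref{maintheo}(2), following the strategy of its converse proof. Let $\phi(\x)$ be a quantifier-free $\LL_{m+1}$-formula realized by $\a$ in some extension. Passing to a differential closure and replacing $\a$ by a $\D\cup\{D\}$-algebraic realization, Fact \ref{good1} produces $M\in GL_{m+1}(\QQ)$ and $k>0$ such that, after absorbing $M$ into the $GL_{m+1}(\QQ)$-freedom of (2), $D^\ell\a\in K\langle\a,D\a,\dots,D^k\a\rangle_\D$ for all $\ell>k$. Write $\phi(\x)=\psi(\x,D\x,\dots,D^r\x)$ with $\psi$ atomic in $\LL_m$ and $r\leq k$, set $\c=(\a,D\a,\dots,D^k\a)$, let $X$ and $Y$ denote the $\D$-loci of $\c$ and $(\c,D\c)$ over $K$, and put $V=X(K)$, $W=Y(K)$. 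Lemma \ref{exten1} gives $W\subseteq\tau_{D/\D}V$; the shift relations $y_{i-1}=x_i$ ($i=1,\dots,k$) and the polynomial equations encoding $\psi$ all lie in $\I_\D((\c,D\c)/K)$, so any $\bar d\in V$ with $(\bar d,D\bar d)\in W$ satisfies $\bar d=(\bar d_0,D\bar d_0,\dots,D^k\bar d_0)$ with $\bar d_0\in K^n$ realizing $\phi$.

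The main obstacle, which Fact \ref{good1} is tailored to overcome, is to show that $W$ projects \emph{surjectively} onto $V$ (rather than merely $\D$-dominantly, as in the proof of Theorem \ref{maintheo}). Because $K\langle\c,D\c\rangle_\D=K\langle\c\rangle_\D$ by Fact \ref{good1}, the coordinates of $D\c$ are $\D$-algebraic over $K\langle\c\rangle_\D$; this should force the specialized $\D$-ideal $\I_\D((\c,D\c)/K)$ over each $\bar d\in V$ to remain a proper $\D$-ideal of $K\{\y\}$ (combining Fact \ref{pro1} with the algebraicity and shift relations), and $(K,\D)\models DCF_{0,m}$ then provides a $K$-point in each fiber of $Y\to X$. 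With surjectivity of $W\to V$ on $K$-points established, our condition (2) produces the required $\bar d$, completing the argument.
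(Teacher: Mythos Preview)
Your forward direction is essentially the same as the paper's, and it is correct.

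The converse direction has a genuine gap, precisely at the point you flag as ``the main obstacle''. You assert that because $D^{k+1}\a\in K\langle\c\rangle_{\D}$, the projection $Y\to X$ should be surjective on $K$-points, and you gesture at Fact~\ref{pro1} and ``algebraicity and shift relations''. This is not a proof, and in fact the implication fails in general. Knowing $D^{k+1}\a=f(\c)/g(\c)$ for $\D$-polynomials $f,g$ puts the relation $g(\x)\cdot\y_k-f(\x)=0$ into $\I((\c,D\c)/K)$, but when you specialize to a point $\bar d\in X(K)$ with $g(\bar d)=0$, this relation degenerates to $f(\bar d)=0$ and no longer determines the fibre; there is no reason the remaining relations in the ideal supply a $K$-point over $\bar d$. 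Fact~\ref{pro1} concerns extending a $\D$-derivation from a subring to a field and says nothing about fibres of Kolchin-closed projections under specialization.

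The paper handles this by a Rabinowitsch-type trick you have omitted: it enlarges $\c$ to include the extra coordinate $1/g(\a,D'\a,\dots,D'^k\a)$. On the $\D'$-locus $X$ of this enlarged tuple the denominator is, by construction, invertible, so $D'^{k+1}\a$ becomes a \emph{polynomial} expression in the coordinates of $\c$, and one computes $D'(1/g)=-(1/g)^2\,\tau_{D'/\D'}g(\c,D'\c)$ likewise polynomially. This yields an explicit $\D'$-polynomial section $s:X\to Y$, $\b\mapsto(\b,s(\b))$, proving surjectivity of $Y\to X$ outright. You need this construction (or an equivalent device forcing $g\neq0$ on $X$) for the argument to go through; the appeal to Fact~\ref{pro1} cannot replace it.

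A secondary point: after invoking Fact~\ref{good1} you work in transformed derivations $(\D',D')$, so the realization you eventually find satisfies a transformed formula. The paper explicitly translates back via an auxiliary formula $\rho$ built from $M^{-1}$; you should make that step explicit as well.
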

\begin{proof}
Suppose $(K,\D\cup\{D\})\models DCF_{0,m+1}$. Clearly $K\models ACF_0$. Suppose $M$, $\D'$, $V$ and $W$ are as in condition (2). Clearly $(K,\D'\cup\{D'\})\models DCF_{0,m+1}$, so by Proposition \ref{prolim} we have that $\Z(f_i,\tau_{D'/\D'}f_i:i=1,\dots,s)=\tau_{D'/\D'}V$. Let $V_i$ be an irreducible component of $V$ and $W'=W\cap \tau_{D'/\D'}V_i$. By Lemma \ref{star}, we can find an irreducible component of $W'$ projecting $\D'$-dominantly onto $V_i$. Now just apply Theorem \ref{maintheo} (with $\D'\cup\{D'\}$ rather than $\D\cup\{D\}$) to get the desired point.

For the converse, we assume conditions (1) and (2) and prove that $(K,\D\cup\{D\})\models DCF_{0,m+1}$. Given $r=1,\dots,m+1$ and $N\in GL_{m+1}(\QQ)$, let $\KK_{r,N}=(K,\bar \D_{r-1}\cup\{\bar D\})$ where $(\bar\D,\bar D)=N(\D,D)$ and $\bar\D_{r-1}=\{\bar \d_1,\dots,\bar \d_{r-1}\}$. Set $\KK_{0,N}$ to be the pure algebraic field $K$. We show by induction that for each $r=0,\dots,m+1$, $\KK_{r,N}\models DCF_{0,r}$ for all $N\in GL_{m+1}(\QQ)$. The result will then follow by setting $r=m+1$ and $N=\operatorname{id}$. The case of $r=0$ is just assumption (1). We assume $0\leq r\leq m$, $N\in GL_{m+1}(\QQ)$, and we show that $\KK_{r+1,N}=(K,\bar \D_r\cup\{\bar D\})\models DCF_{0,r+1}$.

Suppose $\phi(\x)$ is a conjunction of atomic $\LL_{r+1}$-formulas over $K$, with a realisation $\a=(a_1,\dots,a_n)$ in some $\bar\D_{r}\cup\{\bar D\}$-field $F$ extending $\KK_{r+1,N}$. We need to find a realisation of $\phi$ in $\KK_{r+1,N}$. We may assume that each $a_i$ is $\bar\D_{r}\cup\{\bar D\}$-algebraic over $K$ (this can be seen algebraically or one can use the existence of prime models of $DCF_{0,r+1}$ over $K$, see \S 3.2 of \cite{Mc}). 

Let $M'\in GL_{r+1}(\QQ)$ and $k>0$ be the matrix and natural number given by Fact \ref{good1}. Let $M\in GL_{m+1}(\QQ)$ be
\begin{displaymath}
M=E\left(
\begin{array}{c c}
M' & 0 \\
0 & I
\end{array}
\right) EN
\end{displaymath}
where $E$ is the elementary matrix of size $(m+1)$ that interchanges row $(r+1)$ with row $(m+1)$ and $I$ is the identity matrix of size $(m-r)$. Then, setting $(\D',D')=M(\D,D)$, we get
\begin{equation}\label{large}
D'^{k+1}\a = \frac{f(\a, D'\a,\dots, D'^k\a)}{g(\a,D'\a\dots,D'^{k}\a)}
\end{equation}
for some $f, g\in (K\{\x_0,\dots,\x_{k}\}_{\D_r'})^n$. Here $\D'_r=\{\d'_1,\dots,\d'_r\}$ and $K\{\x\}_{\D'_r}$ denotes the $\D'_r$-ring of $\D'_r$-polynomials over $K$. Let
\begin{displaymath}
\c=\left(\a,D' \a,\dots,D'^k\a,\frac{1}{g(\a,D'\a\dots,D'^{k}\a)}\right). 
\end{displaymath}
Let $X\subseteq F^{n(k+2)}$ be the $\D'_r$-locus of $\c$ over $K$ and $Y\subseteq F^{2n(k+2)}$ the $\D_r'$-locus of $(\c,D'\c)$ over $K$.

\noindent {\bf Claim.} $Y$ projects onto $X$.

\noindent Consider the $\D_r'$-polynomial map $s(\x_0,\dots,\x_{k+1}):X\to F^{n(k+2)}$ given by
\begin{displaymath}
s=(\x_1,\dots,\x_k,f \, \x_{k+1},-\x_{k+1}^2\tau_{D'/\D_r'}g(\x_0,\dots,\x_k,\x_1,\dots,\x_k,f\,\x_{k+1}))
\end{displaymath}
where any product between tuples is computed coordinatewise. Using (\ref{large}), an easy computation shows $s(\c)=D'\c$. Given $\b\in X$, we note that $(\b,s(\b))\in Y$. Indeed, if $h$ is a $\D_r'$-polynomial over $K$ vanishing at $(\c,D'\c)$, then $h(\cdot,s(\cdot))$ vanishes at $\c$ and hence on all of $X$. So $(\b,s(\b))$ is in the $\D_r'$-locus of $(\c,D'\c)$ over $K$. That is, $(\b,s(\b))\in Y$. As this point projects onto $\b$ we have proven the claim.

Now, by induction, $(K,\D'_r)\models DCF_{0,r}$. Indeed, $(K,\D'_r)=\KK_{r,N'}$ where $N'$ is obtained from $M$ by interchanging rows $r$ and $(m+1)$. Hence, the claim implies that $Y(K)$ projects onto $X(K)$. Also, if $X(K)=\Z(f_1,\dots,f_s)$ where each $f_i$ is a $\D'_r$-polynomial, then clearly $Y(K)\subseteq \Z(f_i,\tau_{D'/\D'_r}f_i:i=1,\dots,s)$. Hence, by condition (2), there is $\bar d\in X(K)$ such that $(\bar d,D'\bar d)\in Y(K)$.

Now, let $\rho(\x)$ be the $\LL_{r+1}$-formula over $K$ obtained from $\phi$ by replacing each $\d_1,\dots, \d_{r+1}$ for $d_{i,1}\d_1+\cdots+d_{i,r+1}\d_{r+1}$, where $(d_{i,j})\in GL_{r+1}(\QQ)$ is the inverse matrix of $M'$. By construction, $\phi^{(K,\bar\D_{r}\cup\{\bar D\})}=\rho^{(K,\D'_{r}\cup\{D'\})}$. Thus it suffices to find a realisation of $\rho$ in $(K,\D'_{r}\cup\{D'\})$. We may assume that the $k$ of (\ref{large}) is large enough so that we can write
\begin{displaymath}
\rho(\x)=\psi(\x,\d_{r+1}\x,\dots,\d_{r+1}^k\x)
\end{displaymath}
where $\psi$ is a conjunction of atomic $\LL_r$-formulas over $K$. Let
\begin{displaymath}
\chi(\x_0,\dots,\x_{k+1},\y_0,\dots,\y_{k+1}):=\psi(\x_0,\dots,\x_k)\land\left(\land_{i=1}^k x_i=y_{i-1} \right).
\end{displaymath}
Then $(F,\D'_r)\models \chi(\c,D'\c)$, and so, as $(\bar d,D'\bar d)$ is in the $\D_r'$-locus of $(\c,D'\c)$ over $K$, we have that $(F,\D'_r)\models \chi(\bar d,D'\bar d)$. But since $\bar d$ is a $K$-point, we get $(K,\D'_r)\models \chi(\bar d,D'\bar d)$. Writing the tuple $\bar d$ as $(\bar d_0,\dots,\bar d_{r+1})$, we see that $\bar d_0$ is a realisation of $\rho$ in $(K,\D'_{r}\cup\{D'\})$. This completes the proof.
\end{proof}

\begin{remark}\
\begin{enumerate}
\item Condition (2) of Theorem \ref{main2} is indeed first-order; expressible by an infinite collection of $\LL_{m+1}$-sentences, one for each fixed choice of $M$, $f_1,\dots,f_s$ and ``shape'' of $W$.
\item In condition (2) we can strengthen the conclusion to ask for $\{\a\in V: (\a,D'\a)\in W\}$ to be $\D'$-dense in $V$.
\end{enumerate}
\end{remark}

\


\begin{thebibliography}{10}

\bibitem{Bl}
L. Blum.
\newblock Differentially Closed Fields: A Model Theoretic Tour.
\newblock Contributions to Algebra, Academic Press Inc (1977).

\bibitem{Ko}
E. Kolchin.
\newblock Differential Algebra and Algebraic Groups.
\newblock Academic Press. New York, New York (1973).

\bibitem{Kol}
E. Kolchin.
\newblock Differential Algebraic Groups.
\newblock Academic Press (1985).

\bibitem{La}
S. Lang.
\newblock Algebra.
\newblock Springer-Verlag. Third Edition (2002).

\bibitem{Mar0}
D. Marker.
\newblock Model Theory: an Introduction.
\newblock Springer-Verlag. Third Edition (2002).

\bibitem{Mar2}
D. Marker, M. Messmer and A. Pillay.
\newblock Model Theory of Fields.
\newblock Association for Symbolic Logic. LNL 5. Second Edition (2006).

\bibitem{Mc}
T. McGrail.
\newblock The Model Theory of Differential Fields with Finitely Many Commuting Derivations.
\newblock The Journal of Symbolic Logic. Vol. 65, No. 2, pp. 885-913 (June 2000).

\bibitem{Ov}
O. Golubitsky, M. Kondratieva and A. Ovchinnikov.
\newblock On the Generalised Ritt Problem as a Computational Problem.
\newblock Preprint (2010).

\bibitem{Pie}
D. Pierce.
\newblock Fields with Several Commuting Derivations.
\newblock Preprint (2010).

\bibitem{PiPi}
D. Pierce and A. Pillay.
\newblock A Note on the Axioms for Differentially Closed Fields of Characteristic Zero.
\newblock Journal of Algebra 204, 108-115 (1998).

\bibitem{Pi}
A. Pillay.
\newblock Model Theory and Stability Theory, with Applications in Differential Algebra and Algebraic Geometry.
\newblock . Model Theory with Applications to Algebra and Analysis. Vol. 1. London Mathematical Society, Lecture Note Series 349, pp. 1-24 (2008).

\bibitem{Tr}
M. Tressl.
\newblock The Uniform Companion for Large Differential Fields of Characteristic 0.
\newblock Transactions of the American Mathematical Society. Vol. 357, No. 10, pp. 3933-3951 (2005).

\bibitem{Van}
L. van den Dries and K. Schmidt.
\newblock Bounds in the Theory of Polynomial Rings over Fields. A Nonstandard Approach.
\newblock Inventiones Mathematicae 76, pp. 77-91 (1984).

\end{thebibliography}

\end{document}